\providecommand{\U}[1]{\protect\rule{.1in}{.1in}}
\providecommand{\U}[1]{\protect\rule{.1in}{.1in}}
\newtheorem{theorem}{Theorem}
\newtheorem{definition}[theorem]{Definition}
\newtheorem{lemma}[theorem]{Lemma}
\newtheorem{notation}[theorem]{Notation}
\newtheorem{proposition}[theorem]{Proposition}
\newtheorem{remark}[theorem]{Remark}
\newenvironment{proof}[1][Proof]{\textbf{#1.} }{\ \rule{0.5em}{0.5em}}
\begin{document}

\title{\textbf{Relative Microlinearity}\\- Towards the General Theory of Fiber Bundles \\in Infinite-Dimensionl Differential Geometry -}
\author{Hirokazu Nishimura\\Institute of Mathematics, University of Tsukuba\\Tsukuba, Ibaraki, 305-8571, Japan}
\maketitle

\begin{abstract}
In our previous papers [Far East Journal of Mathematical Sciences, \textbf{35}
(2009), 211-223] and [International Journal of Pure and Applied Mathematics,
\textbf{60} (2010), 15-24] \ we have developed the theory of Weil
prolongation, Weil exponentiability and microlinearity for Fr\"{o}licher
spaces. In this paper we will relativize it so as to obtain the theory of
fiber bundles for Fr\"{o}licher spaces. It is shown that any Weil functor
naturally gives rise to a fiber bundle. We will see that the category of fiber
bundles over a fixed Fr\"{o}licher space $M$ and their smooth mappings over
$M$ is cartesian closed. We will see also that the category of vector bundles
over $M$ and their smooth linear mappings over $M$ is cartesian closed. It is
also shown that the tangent bundle functor naturally yields a vector bundle.

\end{abstract}

\section{Introduction}

Smooth manifolds have been the central object of study in
\textit{finite-dimensional} differential geometry. In our previous paper
\cite{nishi2}, we proposed that \textit{microlinear} Fr\"{o}licher spaces will
be the central object of study in \textit{infinite-dimensional} (or rather
\textit{dimensionless}) differential geometry. The paper was followed by
\cite{nishi3} and \cite{nishi4}, which dealt with vector fields and
differential forms respectively. It is to be followed further by papers which
are to be concerned with connections, curvature, jet bundles, and so on. The
basic ideas are simple enough. What has been established in synthetic
differential geometry, which forces us to work within a well-adapted model so
as to make nilpotent infinitesimals visible, can be externalized by using Weil
functors. The simplest and best-known Weil functor is the tangent bundle
functor, which corresponds to the infinitesimal object of first-order
infinitesimals (i.e., real numbers whose squares vanish). Even an ordinary
teener of our modern times knows well that such real numbers must be
exclusively $0$, but, curiously enough, there appeared many such nilpotent
infinitesimals to Newton, Leibniz, Euler and other outstanding mathematicians
of the 17th and 18th centuries. It is in the 19th century, in the midst of the
Industrial Revolution, that nilpotent infinitesimals were ostracized as
anathema and replaced by seemingly rigorous $\varepsilon$-$\delta$ arguments,
which are, fortunately or unfortunately, normal nowadays.

The next most central object in differential geometry has been \textit{fiber
bundles}. The principal objective in this paper is to show how to deal with
fiber bundles without any reference to local trivializations at all, just as
we have replaced smooth manifolds by microlinear Fr\"{o}licher spaces.
Undoubtedly, fiber bundles should be more than a mere smooth mapping $\pi
_{M}^{E}:E\rightarrow M$\ of Fr\"{o}licher spaces. The missing concept is a
sort of relativization of microlinearity as well as Weil exponentiability.
Given a fixed Fr\"{o}licher space $M$, we will develop the theory of $M$-Weil
exponentiability and $M$-microlinearity on the lines of \cite{nishi1} and
\cite{nishi2}. It is to be shown that any Weil functor naturally gives rise to
a fiber bundle.We will finally show that the category of fiber bundles over
$M$ and their smooth mappings over $M$\ as well as the category of vector
bundles over $M$ and their smooth linear mappings over $M$ is cartesian closed.

\section{Preliminaries}

\subsection{Fr\"{o}licher Spaces}

Fr\"{o}licher and his followers have vigorously and consistently developed a
general theory of smooth spaces, often called \textit{Fr\"{o}licher spaces}
for his celebrity, which were intended to be the \textit{underlying set
theory} for infinite-dimensional differential geometry in a certain sense. A
Fr\"{o}licher space is usually depicted as an underlying set endowed with a
class of real-valued functions on it (simply called \textit{structure}
\textit{functions}) and a class of mappings from the set $\mathbb{R}$ of real
numbers to the underlying set (called \textit{structure} \textit{curves})
subject to the condition that structure curves and structure functions should
compose so as to yield smooth mappings from $\mathbb{R}$ to itself. It is
required that the class of structure functions and that of structure curves
should determine each other so that each of the two classes is maximal with
respect to the other as far as they abide by the above condition. For a
standard reference on Fr\"{o}licher spaces, the reader is referred to
\cite{fro}. What is most important among many nice properties about the
category $\mathbf{FS}$ of Fr\"{o}licher spaces and smooth mappings is that

\begin{theorem}
\label{t2.1.1}The category $\mathbf{FS}$ is cartesian closed.
\end{theorem}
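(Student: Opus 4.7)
The plan is to establish cartesian closedness by explicitly constructing finite products and exponential objects in $\mathbf{FS}$ and then exhibiting the adjunction by currying. First I would take the terminal object to be a singleton with its unique Fr\"{o}licher structure, and define the product $X\times Y$ to have the Cartesian product as underlying set equipped with the coarsest structure making the projections smooth. Concretely, a curve $c:\mathbb{R}\to X\times Y$ is declared a structure curve iff $\pi_{X}\circ c$ and $\pi_{Y}\circ c$ are structure curves, and the associated class of structure functions is obtained by taking all $\varphi:X\times Y\to\mathbb{R}$ such that $\varphi\circ c$ is smooth for every such $c$. One then checks that these two classes are mutually maximal, using closure of smooth maps $\mathbb{R}\to\mathbb{R}$ under composition. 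The universal property for products is immediate.

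Next I would construct the exponential $Z^{Y}$ whose underlying set is $\mathbf{FS}(Y,Z)$. The natural definition is that $c:\mathbb{R}\to Z^{Y}$ is a structure curve iff the transposed map $\hat{c}:\mathbb{R}\times Y\to Z$, $(t,y)\mapsto c(t)(y)$, is smooth in the product Fr\"{o}licher structure already defined; structure functions on $Z^{Y}$ are then generated by maximality. A first sanity check is that the evaluation map $\mathrm{ev}:Z^{Y}\times Y\to Z$ is smooth, which follows because an arbitrary structure curve of $Z^{Y}\times Y$ is (up to reparametrization) a pair $(c,d)$, and $\mathrm{ev}\circ(c,d)$ sends $t$ to $\hat{c}(t,d(t))$, which is smooth as a composite of $\hat{c}$ with the smooth curve $t\mapsto(t,d(t))$.

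The adjunction $\mathbf{FS}(X\times Y,Z)\cong\mathbf{FS}(X,Z^{Y})$ is then set up by currying: a smooth $f:X\times Y\to Z$ is sent to $\check{f}:X\to Z^{Y}$ with $\check{f}(x)(y)=f(x,y)$. To see that $\check{f}$ is smooth as a map into $Z^{Y}$ I must verify that $\check{f}\circ c_{X}$ is a structure curve of $Z^{Y}$ for every structure curve $c_{X}$ of $X$; by definition of the structure on $Z^{Y}$ this reduces to smoothness of $(t,y)\mapsto f(c_{X}(t),y)$, which is the composite $f\circ(c_{X}\times\mathrm{id}_{Y})$ and hence smooth. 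The inverse operation, uncurrying a smooth $g:X\to Z^{Y}$ to $(x,y)\mapsto g(x)(y)$, is smooth by the corresponding test on curves $(c_{X},c_{Y})$ of $X\times Y$, using smoothness of the transpose of $g\circ c_{X}$. Naturality of the bijection in $X$ and $Z$ is a routine diagram chase.

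The main obstacle, and the point requiring genuine care rather than formal manipulation, is the interplay between the two defining classes in a Fr\"{o}licher structure: I define $Z^{Y}$ by prescribing its structure curves, but verifying the cartesian-closed adjunction ultimately requires the structure curves to be compatible with a maximal class of structure functions in the Fr\"{o}licher sense. The delicate step is therefore the verification that the class of curves $c$ with $\hat{c}$ smooth is already saturated, i.e., equals the class of curves detected by the induced structure functions; this rests on the fact that smoothness into $Z$ is itself detected curve-wise, so that a curve $c:\mathbb{R}\to Z^{Y}$ survives all real-valued tests exactly when $\hat{c}$ does, and the latter is a condition about smoothness into $Z$ in the already-established product structure.
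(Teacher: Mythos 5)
The paper offers no proof of this theorem at all: it is quoted as a known result with a pointer to Fr\"{o}licher--Kriegl \cite{fro}, so there is nothing internal to compare your argument against. Your outline does follow the standard construction (initial structure on the product, curves into $Z^{Y}$ defined as those with smooth adjoint, currying/uncurrying for the adjunction), and the product part and the two formal directions of the adjunction are fine as far as they go.

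However, the step you yourself single out as delicate is a genuine gap, not a routine verification, and the justification you give for it is essentially circular. You must show that the curve class $C=\{c:\mathbb{R}\to Z^{Y}\mid\hat{c}\ \text{smooth}\}$ is already saturated with respect to the structure functions it generates; without this, a smooth $g:X\to Z^{Y}$ only gives you $g\circ c_{X}$ in the saturated class, not in $C$, and uncurrying breaks down. Testing $c$ against the structure functions of $Z^{Y}$ only obviously yields, via the point evaluations $h\mapsto\psi(h(y))$ with $\psi\in F_{Z}$, that $t\mapsto\psi(c(t)(y))$ is smooth for each \emph{fixed} $y$; passing from this separate smoothness to joint smoothness of $\hat{c}$ along arbitrary curves $t\mapsto(s(t),d(t))$ is precisely the exponential law $C^{\infty}(\mathbb{R}^{2},\mathbb{R})\cong C^{\infty}(\mathbb{R},C^{\infty}(\mathbb{R},\mathbb{R}))$ in the generating case $Y=Z=\mathbb{R}$. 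That statement is the analytic heart of the theorem and rests on Boman's theorem together with the special curve lemma and uniform-boundedness arguments of the convenient calculus; the phrase ``smoothness into $Z$ is detected curve-wise'' merely restates what has to be proved, since you have no a priori description of the saturated function class on $Z^{Y}$ rich enough to force $\hat{c}$ to be smooth. To close the proof you would need to either carry out that analysis or explicitly import it from \cite{fro}, as the paper does.
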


\begin{notation}
We use $\cdot\times\cdot$ and $[\cdot,\cdot]$\ for product and exponentiation
in the category $\mathbf{FS}$.
\end{notation}

Recall that a Fr\"{o}licher space endowed with a compatible linear structure
is called a \textit{preconvenient vector space}. It is well known that

\begin{theorem}
\label{t2.1.2}The category $\mathbf{preCon}$\ of preconvenient vector spaces
and their smooth linear mappings is cartesian closed.
\end{theorem}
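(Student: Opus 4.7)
The plan is to lift the cartesian closed structure on $\mathbf{FS}$ from Theorem \ref{t2.1.1} to $\mathbf{preCon}$ by endowing the Fr\"olicher product and the Fr\"olicher function space with suitable linear structures. Concretely, for $V, W \in \mathbf{preCon}$ I would take the product to be $V \times W$ from $\mathbf{FS}$ with componentwise addition and scalar multiplication, and the exponential $V \Rightarrow W$ to be the set $L(V, W)$ of smooth linear maps $V \to W$, regarded as a Fr\"olicher subspace of $[V, W]$ and equipped with pointwise vector space operations.

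First I would verify that both constructions live in $\mathbf{preCon}$. Smoothness of componentwise addition and scalar multiplication on $V \times W$ reduces to smoothness of the corresponding operations on each factor, via the universal property of the product in $\mathbf{FS}$. For $L(V,W)$, smoothness of pointwise addition corresponds under the $\mathbf{FS}$-adjunction to smoothness of the map $(f, g, v) \mapsto f(v) + g(v)$ into $W$, which follows from smoothness of evaluation $[V,W] \times V \to W$ combined with smoothness of $+$ on $W$; scalar multiplication is analogous. A pointwise sum or scalar multiple of smooth linear maps is again smooth linear, so $L(V,W)$ is indeed a preconvenient vector space.

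The core step is the natural bijection realizing cartesian closedness: a smooth map $V \times W \to U$ respecting the $\mathbf{preCon}$ structure should correspond via currying to a smooth linear map $V \to L(W, U)$, and conversely. Smoothness of both the bijection and its inverse transfers directly from Theorem \ref{t2.1.1}, since the Fr\"olicher structure on $L(W,U)$ is the one inherited as a subspace of $[W,U]$; naturality in $V$, $W$, $U$ is likewise inherited from the corresponding adjunction in $\mathbf{FS}$.

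The main obstacle will be the careful verification that currying and uncurrying preserve linearity in the relevant sense, since the algebraic linearity condition interacts delicately with the decomposition $V \times W = V \oplus W$: one must check, using smoothness of evaluation on $[W,U]\times W$ together with smoothness of $+$ on the target, that the image of a $\mathbf{preCon}$-morphism under the $\mathbf{FS}$-adjunction again respects the linear structure, and that the evaluation map $L(W,U)\times W \to U$ and the abstraction operator are themselves morphisms in $\mathbf{preCon}$. Once this identification is pinned down, the triangle identities reduce to those holding in $\mathbf{FS}$ and the universal property follows formally.
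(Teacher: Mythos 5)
The paper offers no proof of this theorem at all; it is quoted as ``well known,'' with Fr\"{o}licher--Kriegl \cite{fro} as the implicit source, so there is nothing internal to compare your argument against. Judged on its own terms, your proposal breaks down at exactly the point you flag as ``the main obstacle'': currying does not preserve linearity, and no amount of care can make it do so. In a category of vector spaces and linear maps the categorical product $V\times W$ is the biproduct $V\oplus W$, so a $\mathbf{preCon}$-morphism $f\colon V\times W\to U$ satisfies $f(v,w)=f(v,0)+f(0,w)$. The curried map $v\mapsto f(v,-)$ therefore takes values in \emph{affine} maps ($f(v,-)$ sends $0\in W$ to $f(v,0)$, which need not vanish) and does not land in $L(W,U)$ at all; and even at the level of underlying sets, $\mathrm{Hom}(V\oplus W,U)\cong\mathrm{Hom}(V,U)\times\mathrm{Hom}(W,U)$ is not naturally isomorphic to $\mathrm{Hom}(V,L(W,U))$, which is the space of \emph{bilinear} maps $V\times W\to U$. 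The obstruction is structural: a category with a zero object (and $\mathbf{preCon}$ has one, the zero space) cannot be cartesian closed unless it is trivial, since $-\times X$ would be a left adjoint and hence preserve the initial object, giving $X\cong 1\times X\cong 0\times X\cong 0$. So the literal universal property you set out to verify is not available by any route.

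What \cite{fro} actually proves, and what the rest of the paper uses (Proposition \ref{t6.4}, for instance, invokes only that $\left[E,\cdot\right]_{M}^{\mathrm{Lin}}$ is product-preserving, i.e.\ a right adjoint of some kind), is that $\mathbf{preCon}$ is \emph{symmetric monoidal closed}: the exponential object is exactly the one you construct, namely $L(W,U)$ as a Fr\"{o}licher subspace of $\left[W,U\right]$ with pointwise operations, and your verifications that this is again a preconvenient vector space and that evaluation is smooth are sound and are genuinely part of the proof. But the adjunction must be taken against bilinear maps (equivalently a bornological tensor product), $L(V,L(W,U))\cong L(V,W;U)\cong L(V\otimes W,U)$, rather than against the cartesian product. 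Either reinterpret ``cartesian closed'' in the statement in this monoidal sense, or your strategy cannot be completed as written.
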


\begin{notation}
We use $\cdot\times\cdot$ and $[\cdot,\cdot]^{\mathrm{Lin}}$\ for product and
exponentiation in the category $\mathbf{preCon}$.
\end{notation}

These two results can easily be relativized.

\begin{theorem}
\label{t2.1.3}Let $M$\ be a Fr\"{o}licher space. The slice category
$\mathbf{FS}/M$ is cartesian closed.
\end{theorem}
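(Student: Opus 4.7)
The plan is to exhibit explicit constructions of the terminal object, binary products, and exponentials in $\mathbf{FS}/M$, using Theorem~\ref{t2.1.1} together with the observation that $\mathbf{FS}$ admits equalizers as subspaces carrying the induced Fr\"olicher structure, and hence all finite limits. I would first identify the terminal object of $\mathbf{FS}/M$ as $\mathrm{id}_{M}\colon M\to M$, and the binary product of $f\colon X\to M$ and $g\colon Y\to M$ as the pullback $X\times_{M}Y\to M$, realized in $\mathbf{FS}$ as the equalizer of $f\circ\pi_X$ and $g\circ\pi_Y$ inside $X\times Y$; their universal properties in $\mathbf{FS}/M$ are immediate from the corresponding ones in $\mathbf{FS}$.

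For the exponential of $g\colon Y\to M$ and $h\colon Z\to M$, I would set $[g,h]_{M}$ to have underlying set $\bigsqcup_{x\in M}[g^{-1}(x),h^{-1}(x)]$, with the projection $p\colon [g,h]_{M}\to M$ sending a fiber map to its base point, and equip it with the Fr\"olicher structure whose structure curves $c\colon\mathbb{R}\to[g,h]_{M}$ are precisely those satisfying (i)~$p\circ c\colon\mathbb{R}\to M$ is a structure curve of $M$, and (ii)~the uncurried map $\tilde c\colon\mathbb{R}\times_{M}Y\to Z$, $(t,y)\mapsto c(t)(y)$ (well-defined since $g(y)=p(c(t))$ on the pullback), is smooth in $\mathbf{FS}$. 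The exponential adjunction
\[
\mathrm{Hom}_{\mathbf{FS}/M}(f\times_{M}g,h)\;\cong\;\mathrm{Hom}_{\mathbf{FS}/M}(f,[g,h]_{M}),\qquad \psi\mapsto\bigl(a\mapsto\psi(a,\cdot)\bigr),
\]
then follows: the forward direction uses the defining smoothness criterion on $[g,h]_{M}$ (conditions (i) and (ii) for $\widetilde\psi\circ c_{X}$ reduce to smoothness of $f$ and of $\psi$), while the backward direction uncurries a given morphism $\alpha\colon f\to[g,h]_{M}$ by composing with the fiberwise evaluation $[g,h]_{M}\times_{M}Y\to Z$, which is smooth by construction (take $c=\alpha\circ c_{X}$ in (ii)).

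The main obstacle will be verifying that the proposed class of structure curves on $[g,h]_{M}$ indeed defines a Fr\"olicher structure, i.e.\ that it coincides with the class of curves dually generated by its own structure functions in the usual Fr\"olicher closure. The essential input is the cartesian closure of $\mathbf{FS}$ (Theorem~\ref{t2.1.1}), which ensures that smoothness of the uncurried maps $\tilde c$ behaves well under reparametrization and composition with smooth curves, so the generated Fr\"olicher structure does not admit more smooth curves than the ones already specified; once this is in place, naturality of the adjunction in $f$ drops out for free from the symmetric pullback nature of the construction.
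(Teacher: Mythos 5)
The paper offers no proof of this theorem --- it is asserted as an easy relativization of Theorem \ref{t2.1.1} --- so your proposal has to be judged on its own merits. Your identification of the terminal object, of the pullback $X\times_{M}Y$ (with the initial structure inside $X\times Y$) as the binary product, and of the fiberwise mapping space as the candidate exponential, together with the curry/uncurry bookkeeping, is the standard and correct skeleton. The problem is that the step you yourself flag as ``the main obstacle'' is not one you then overcome: it is the entire mathematical content of the theorem, and the reason you give for why it works is not valid. Cartesian closedness of $\mathbf{FS}$ cannot by itself ``ensure'' that your prescribed curve class on $[g,h]_{M}$ survives the Fr\"olicher saturation, because cartesian closedness of a category does not in general imply cartesian closedness of its slices (the category of small categories is the classical counterexample); some input genuinely about Fr\"olicher structures is required, and none is supplied.

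Concretely, the saturation operator (curves $\to$ functions $\to$ curves) can only enlarge a class of curves, so to show it enlarges yours by nothing you must exhibit enough real-valued structure functions on $[g,h]_{M}$ to detect conditions (i) and (ii). The natural candidates are compositions with structure functions of $M$ (which only detect (i)) and evaluations against smooth sections of $g$ followed by structure functions of $Z$ --- but a smooth surjection of Fr\"olicher spaces need not admit any smooth sections, local or global, so these may not exist in sufficient supply. Moreover, the device that makes the absolute case work (reduction to $\left[\mathbb{R},\mathbb{R}\right]=\mathcal{C}^{\infty}(\mathbb{R},\mathbb{R})$ via Boman-type lemmas) does not transport to your setting, since the domain $\mathbb{R}\times_{M}Y$ of the uncurried map $\tilde{c}$ varies with the curve $c$ (it depends on $p\circ c$). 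Until the saturation claim is established, the smoothness of the evaluation map, and hence the backward direction of your adjunction, is unproven; as written, the proposal reduces the theorem to an assertion that is at least as hard as the theorem itself.
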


\begin{notation}
We use $\cdot\times_{M}\cdot$ and $[\cdot,\cdot]_{M}$\ for product and
exponentiation in the category $\mathbf{FS}/M$.
\end{notation}

\begin{theorem}
\label{t2.1.4}The category $\mathbf{preVect}_{M}$\ consisting of vector spaces
in the category $\mathbf{FS}/M$\ and their linear morphisms in the category
$\mathbf{FS}/M$\ is cartesian closed.
\end{theorem}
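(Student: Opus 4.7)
The plan is to lift the cartesian closed structure on $\mathbf{FS}/M$ provided by Theorem~\ref{t2.1.3} to $\mathbf{preVect}_{M}$, in direct parallel to how Theorem~\ref{t2.1.2} refines Theorem~\ref{t2.1.1}.

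First, I would establish finite products in $\mathbf{preVect}_{M}$. The terminal object is $\mathrm{id}_{M}:M\to M$ regarded as the zero bundle. Given two vector space objects $\pi_{i}:E_{i}\to M$ ($i=1,2$), their product is the fibered product $E_{1}\times_{M}E_{2}\to M$ from $\mathbf{FS}/M$ equipped with componentwise addition, scalar multiplication, and zero section, all of which are smooth morphisms over $M$ by the universal property of $\times_{M}$. Because a morphism into $E_{1}\times_{M}E_{2}$ in $\mathbf{FS}/M$ is linear precisely when both of its components are, the universal property of the product in $\mathbf{preVect}_{M}$ reduces to that in $\mathbf{FS}/M$.

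Second, I would construct exponentials. Start with the $\mathbf{FS}/M$-exponential $[E_{1},E_{2}]_{M}$ and cut out the subobject $[E_{1},E_{2}]_{M}^{\mathrm{Lin}}$ of fiberwise linear maps. Concretely, $[E_{1},E_{2}]_{M}^{\mathrm{Lin}}$ is obtained as the intersection (i.e.\ equalizer) in $\mathbf{FS}/M$ of two pairs of parallel morphisms $[E_{1},E_{2}]_{M}\rightrightarrows [E_{1}\times_{M}E_{1},E_{2}]_{M}$ and $[E_{1},E_{2}]_{M}\rightrightarrows [\mathbb{R}\times E_{1},E_{2}]_{M}$ expressing the linearity conditions $f(e+e')=f(e)+f(e')$ and $f(\lambda e)=\lambda f(e)$; these morphisms are built transposing suitable combinations of $\mathrm{ev}$ with the vector space operations on $E_{1}$ and $E_{2}$. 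Pointwise addition and scalar multiplication inherited from $E_{2}$ then endow $[E_{1},E_{2}]_{M}^{\mathrm{Lin}}$ with a vector space object structure over $M$; the sum and scalar multiple of linear maps being linear ensures that these operations restrict to the subobject.

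Third, I would verify the adjunction by specializing the adjunction from Theorem~\ref{t2.1.3}. Given $F\in\mathbf{preVect}_{M}$, a morphism $g:F\times_{M}E_{1}\to E_{2}$ transposes to $\tilde{g}:F\to[E_{1},E_{2}]_{M}$ in $\mathbf{FS}/M$. Two matching statements must be checked: (a)~$\tilde{g}$ factors through the subobject $[E_{1},E_{2}]_{M}^{\mathrm{Lin}}$ if and only if $g$ is linear in its $E_{1}$-argument, which is immediate from the way the equalizer was defined; and (b)~$\tilde{g}$ is a linear morphism in $\mathbf{preVect}_{M}$ if and only if $g$ is linear in its $F$-argument. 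Cartesian closure of $\mathbf{preVect}_{M}$ follows from these two equivalences together with Step~1.

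The main obstacle is item~(b) of Step~3: verifying that the pointwise vector space structure on $[E_{1},E_{2}]_{M}^{\mathrm{Lin}}$ corresponds under transposition to linearity in the $F$-variable. This requires chasing the naturality of the $\mathbf{FS}/M$-adjunction together with the universal property of the evaluation map $\mathrm{ev}:[E_{1},E_{2}]_{M}^{\mathrm{Lin}}\times_{M}E_{1}\to E_{2}$. Once unwound, both sides of each equivalence reduce to the same diagram built from $+_{E_{2}}$ and $\cdot_{E_{2}}$, so the equivalence falls out, but the bookkeeping of transposing along $\mathrm{Hom}(F\times_{M}E_{1},E_{2})\cong\mathrm{Hom}(F,[E_{1},E_{2}]_{M})$ while tracking linearity conditions on both factors is the delicate point.
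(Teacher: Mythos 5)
The paper gives no proof of Theorem \ref{t2.1.4} at all: it is stated, together with Theorem \ref{t2.1.3}, right after the remark that the unrelativized results (Theorems \ref{t2.1.1} and \ref{t2.1.2}, quoted from Fr\"olicher--Kriegl \cite{fro}) ``can easily be relativized,'' so there is no argument of the paper to compare yours with step by step. Your Steps 1 and 2 are the standard constructions and are unobjectionable: fibrewise products with componentwise operations give finite products in $\mathbf{preVect}_{M}$, and carving $[E_{1},E_{2}]_{M}^{\mathrm{Lin}}$ out of $[E_{1},E_{2}]_{M}$ by equalizers expressing fibrewise additivity and homogeneity, with the pointwise linear structure inherited from $E_{2}$, produces exactly the internal hom object the paper uses in Section 6.

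The genuine gap is Step 3. A morphism $g:F\times_{M}E_{1}\to E_{2}$ in $\mathbf{preVect}_{M}$ is linear on the product vector space object, i.e.\ fibrewise $g(f+f',e+e')=g(f,e)+g(f',e')$, which forces $g(f,e)=g(f,0)+g(0,e)$; it is \emph{not} separately linear in each variable. Hence its $\mathbf{FS}/M$-transpose $\tilde{g}$ sends $f$ to the map $e\mapsto g(f,0)+g(0,e)$, which is affine rather than linear in $e$, so your item (a) already fails: $\tilde{g}$ does not factor through $[E_{1},E_{2}]_{M}^{\mathrm{Lin}}$ unless $g$ annihilates the $F$-factor. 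Conversely, linear morphisms $F\to[E_{1},E_{2}]_{M}^{\mathrm{Lin}}$ correspond to fibrewise \emph{bilinear} maps $F\times_{M}E_{1}\to E_{2}$, and a map that is both bilinear and additive on the fibrewise direct sum is zero. So the adjunction $\mathrm{Hom}(F\times_{M}E_{1},E_{2})\cong\mathrm{Hom}(F,[E_{1},E_{2}]_{M}^{\mathrm{Lin}})$ you aim at cannot hold nontrivially; indeed $\mathbf{preVect}_{M}$ has a zero object (the zero bundle $\mathrm{id}_{M}$, which is both your terminal object and initial via the zero section), and a category with a zero object is cartesian closed in the literal sense only if it is trivial. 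The difficulty you flag at the end as ``delicate bookkeeping'' in item (b) is in fact this obstruction, not a matter of chasing naturality. What is true, and what the paper actually needs later (e.g.\ the product-preservation of $[E,\cdot]_{M}^{\mathrm{Lin}}$ in Proposition \ref{t6.4}), is the weaker assertion that $\mathbf{preVect}_{M}$ has finite products and internal homs $[\cdot,\cdot]_{M}^{\mathrm{Lin}}$, the latter being right adjoint to a fibrewise tensor (monoidal) structure rather than to $\times_{M}$ --- which is how the underlying Fr\"olicher--Kriegl closedness result for $\mathbf{preCon}$, and hence its relativization, should be read. Your Steps 1 and 2 prove that weaker statement; the cartesian transposition argument of Step 3 should be abandoned rather than repaired.
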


\begin{notation}
We use $\cdot\times_{M}\cdot$ and $[\cdot,\cdot]_{M}^{\mathrm{Lin}}$\ for
product and exponentiation in the category $\mathbf{preVect}_{M}$.
\end{notation}

\subsection{Weil Algebras}

The notion of a \textit{Weil algebra} was introduced by Weil himself in
\cite{wei}. We denote by $\mathbf{W}$ the category of Weil algebras and their
$\mathbb{R}$-algebra homomorphisms preserving maximal ideals. It is well known
that the category $\mathbf{W}$\ is left exact. Roughly speaking, each Weil
algebra $W$\ corresponds to an infinitesimal object $\mathcal{D}_{W}$\ in the
shade. By way of example, the Weil algebra $\mathbb{R}[X]/(X^{2})$ (=the
quotient ring of the polynomial ring $\mathbb{R}[X]$\ of an indeterminate
$X$\ modulo the ideal $(X^{2})$\ generated by $X^{2}$) corresponds to the
infinitesimal object of first-order nilpotent infinitesimals, while the Weil
algebra $\mathbb{R}[X]/(X^{3})$ corresponds to the infinitesimal object of
second-order nilpotent infinitesimals. Although an infinitesimal object is
undoubtedly imaginary in the real world, as has harassed both mathematicians
and philosophers of the 17th and the 18th centuries because mathematicians at
that time preferred to talk infinitesimal objects as if they were real
entities, each Weil algebra yields its corresponding \textit{Weil functor} on
the category of smooth manifolds of some kind to itself, which is no doubt a
real entity. Intuitively speaking, the Weil functor corresponding to a Weil
algebra stands for the exponentiation by the infinitesimal object
corresponding to the Weil algebra at issue. For Weil functors on the category
of finite-dimensional smooth manifolds, the reader is referred to \S 35 of
\cite{kolar}, while the reader can find a readable treatment of Weil functors
on the category of smooth manifolds modelled on convenient vector spaces in
\S 31 of \cite{kri}.

\textit{Synthetic differential geometry }(usually abbreviated to SDG), which
is a kind of differential geometry with a cornucopia of nilpotent
infinitesimals, was forced to invent its models, in which nilpotent
infinitesimals were visible. For a standard textbook on SDG, the reader is
referred to \cite{lav}, while he or she is referred to \cite{kock} for the
model theory of SDG, which was vigorously constructed by Dubuc \cite{dub} and
others. Although we do not get involved in SDG herein, we will exploit
locutions in terms of infinitesimal objects so as to make the paper highly
readable. Thus we prefer to write $\mathcal{W}_{D}$\ and $\mathcal{W}_{D_{2}}
$\ in place of $\mathbb{R}[X]/(X^{2})$ and $\mathbb{R}[X]/(X^{3})$
respectively, where $D$ stands for the infinitesimal object of first-order
nilpotent infinitesimals, and $D_{2}$\ stands for the infinitesimal object of
second-order nilpotent infinitesimals. To Newton and Leibniz, $D$ stood for
\[
\{d\in\mathbb{R}\mid d^{2}=0\}
\]
while $D_{2}$\ stood for
\[
\{d\in\mathbb{R}\mid d^{3}=0\}
\]
We will write $\mathcal{W}_{d\in D_{2}\mapsto d^{2}\in D}$ for the homomorphim
of Weil algebras $\mathbb{R}[X]/(X^{2})\rightarrow\mathbb{R}[X]/(X^{3})$
induced by the homomorphism $X\rightarrow X^{2}$ of the polynomial ring
\ $\mathbb{R}[X]$ to itself. Such locutions are justifiable, because the
category $\mathbf{W}$ in the real world and the category of infinitesimal
objects in the shade are dual to each other in a sense, being interconnected
by the contravariant functors $\mathcal{W}$\ (from the category of
infinitesimal objects to the category of Weil algebras) and $\mathcal{D}$
(from the category of Weil algebras to the category of infinitesimal objects).
To familiarize himself or herself with such locutions, the reader is strongly
encouraged to read the first two chapters of \cite{lav}, even if he or she is
not interested in SDG at all.

\subsection{Microlinearity}

In \cite{nishi1} we have discussed how to assign, to each pair $(M,W)$\ of a
Fr\"{o}licher space $M$ and a Weil algebra $W$,\ another Fr\"{o}licher space
$M\otimes W$\ called the \textit{Weil prolongation of} $M$ \textit{with
respect to} $W$, which is naturally extended to a bifunctor $\mathbf{FS}%
\times\mathbf{W\rightarrow FS}$, and then to show that the functor
$\cdot\otimes W:\mathbf{FS\rightarrow FS}$ is product-preserving for any Weil
algebra $W$. Weil prolongations are well-known as \textit{Weil functors} for
finite-dimensional and infinite-dimensional smooth manifolds in orthodox
differential geometry, as we have already touched upon in the preceding
subsection. We note in passing that

\begin{lemma}
\label{t2.3.1}For any Fr\"{o}licher spaces $M$\ and $N$\ and for any Weil
algebra $W$, we have
\[
\left[  N,M\right]  \otimes W=\left[  N,M\otimes W\right]
\]

\end{lemma}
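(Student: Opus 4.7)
The plan is to establish the equality by a Yoneda-style argument, exhibiting a natural bijection
\[
\mathrm{Hom}(X,[N,M]\otimes W)\;\cong\;\mathrm{Hom}(X,[N,M\otimes W])
\]
for every Fr\"{o}licher space $X$. Heuristically the identity mirrors the synthetic calculation
\[
[N,M]^{\mathcal{D}_{W}}=[\mathcal{D}_{W}\times N,M]=[N\times\mathcal{D}_{W},M]=[N,M^{\mathcal{D}_{W}}],
\]
in which $\mathcal{D}_{W}$ is the infinitesimal object dual to $W$; the middle two equalities are instances of cartesian closedness (Theorem \ref{t2.1.1}) combined with commutativity of the product, while the outer equalities are the shorthand $\cdot\otimes W=[\mathcal{D}_{W},\cdot]$.

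To externalize this inside $\mathbf{FS}$, I would combine Theorem \ref{t2.1.1} with the representing property of Weil prolongations established in \cite{nishi1}. First, apply the Weil-prolongation property to rewrite $\mathrm{Hom}(X,[N,M]\otimes W)$ as the set of ``$W$-twisted'' smooth maps $X\to[N,M]$, and then use cartesian closedness to reinterpret these as ``$W$-twisted'' smooth maps $X\times N\to M$. Conversely, rewrite $\mathrm{Hom}(X,[N,M\otimes W])$ by first applying cartesian closedness to obtain smooth maps $X\times N\to M\otimes W$, and then applying the Weil-prolongation property to obtain ``$W$-twisted'' smooth maps $X\times N\to M$. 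The two descriptions coincide, naturally in $X$, so Yoneda delivers the required isomorphism of Fr\"{o}licher spaces.

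The principal obstacle is pinning down the precise universal characterization of $\cdot\otimes W$ used at the two critical steps, as this is the point at which the argument genuinely ``sees'' the Weil algebra $W$; once that correspondence is in hand, everything else is formal manipulation within the cartesian closed structure. A subsidiary check is that the bijection so constructed is smooth in $X$ rather than merely a set-theoretic map, but this follows from its naturality together with the full-and-faithfulness of the Yoneda embedding $\mathbf{FS}\hookrightarrow[\mathbf{FS}^{\mathrm{op}},\mathbf{Set}]$.
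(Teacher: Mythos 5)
The paper does not prove this lemma at all; it is imported from \cite{nishi1} with the phrase ``we note in passing,'' so there is no in-text argument to compare yours against. Judged on its own terms, your proposal has a genuine gap: the entire content of the lemma is concentrated in the step you yourself flag as ``the principal obstacle,'' namely the ``precise universal characterization of $\cdot\otimes W$'' that would identify $\mathrm{Hom}(X,Y\otimes W)$ with some collection of ``$W$-twisted'' maps $X\to Y$. No such representing property is available: $M\otimes W$ is \emph{defined} (see Section 3 of this paper for the relative version) as a concrete quotient of $\mathcal{C}^{\infty}(\mathbb{R}^{n},M)$ by the relation $f\equiv g$ iff $f(0)=g(0)$ and $\chi\circ f-\chi\circ g\in I$ for all structure functions $\chi$, equipped with the initial smooth structure induced by the maps into $\mathbb{R}\otimes W$. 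The infinitesimal object $\mathcal{D}_{W}$ lives only ``in the shade'' and is not an object of $\mathbf{FS}$, so the shorthand $\cdot\otimes W=[\mathcal{D}_{W},\cdot]$ cannot be invoked literally inside $\mathbf{FS}$; it is exactly the heuristic the lemma is meant to externalize, and using it as a premise is circular.

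A proof that engages with the actual construction must compare two concretely different objects: an element of $[N,M]\otimes W$ is an equivalence class of a single smooth map $\mathbb{R}^{n}\times N\to M$ modulo structure functions of $[N,M]$, whereas an element of $[N,M\otimes W]$ is a smooth $N$-parametrized family of equivalence classes of maps $\mathbb{R}^{n}\to M$ modulo structure functions of $M$. The canonical comparison map $[F]\mapsto(y\mapsto[F(\cdot,y)])$ is easily seen to be well defined, but its injectivity (do the ``pointwise'' structure functions $f\mapsto\chi(f(y))$ detect the equivalence on $[N,M]$?), its surjectivity (can every smooth family of classes be realized by one global $F$?), and the agreement of the two initial smooth structures are all substantive checks that your Yoneda framing silently absorbs. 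Until those are carried out, the argument is a restatement of the claim rather than a proof.
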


The central object of study in SDG is \textit{microlinear} spaces. Although
the notion of a manifold (=a pasting of copies of a certain linear space) is
defined on the local level, the notion of microlinearity is defined on the
genuinely infinitesimal level. For the historical account of microlinearity,
the reader is referred to \S \S 2.4 of \cite{lav} or Appendix D of
\cite{kock}. To get an adequately restricted cartesian closed subcategory of
Fr\"{o}licher spaces, we have emancipated microlinearity from within a
well-adapted model of SDG to Fr\"{o}licher spaces within the real world in
\cite{nishi2}. Recall that a Fr\"{o}licher space $M$ is called
\textit{microlinear} providing that any finite limit diagram $\mathbb{D}$ in
$\mathbf{W}$ yields a limit diagram $M\otimes\mathbb{D}$ in $\mathbf{FS}$,
where $M\otimes\mathbb{D}$ is obtained from $\mathbb{D}$ by putting $M\otimes$
to the left of every object in $\mathbb{D}$ and putting $\mathrm{id}%
_{M}\otimes$ to the left of every morphism in $\mathbb{D}$. As we have
discussed there, all convenient vector spaces are microlinear, so that all
$C^{\infty}$-manifolds in the sense of \cite{kri} (cf. Section 27) are also microlinear.

We have no reason to hold that all Fr\"{o}licher spaces credit Weil
prolongations as exponentiation by infinitesimal objects in the shade.
Therefore we need a notion which distinguishes Fr\"{o}licher spaces that do so
from those that do not. Here we slighly modify the notion of Weil
exponentiability introduced in \cite{nishi1}. A Fr\"{o}licher space $M$ is
called \textit{Weil exponentiable }if
\begin{equation}
M\otimes(W_{1}\otimes_{\infty}W_{2})=(M\otimes W_{1})\otimes W_{2}%
\label{2.3.1}%
\end{equation}
holds naturally for any Weil algebras $W_{1}$ and $W_{2}$.

\begin{proposition}
\label{t2.3.2}If a Fr\"{o}licher space $M$\ is Weil exponentiable, then so is
$\left[  N,M\right]  $\ for any Fr\"{o}licher space $N$.
\end{proposition}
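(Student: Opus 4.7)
The plan is to string together Lemma \ref{t2.3.1} (exponential-Weil commutation) with the Weil exponentiability of $M$, treating $[N,\cdot]$ as a ``transparent'' operation with respect to Weil prolongation. Concretely, I would fix arbitrary Weil algebras $W_1$ and $W_2$ and compute $[N,M]\otimes(W_1\otimes_\infty W_2)$ by successive rewrites.

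First I would apply Lemma \ref{t2.3.1} with the Weil algebra $W_1\otimes_\infty W_2$ to obtain
\[
[N,M]\otimes(W_1\otimes_\infty W_2)=[N,M\otimes(W_1\otimes_\infty W_2)].
\]
Next, since $M$ is Weil exponentiable, the inner object rewrites as $(M\otimes W_1)\otimes W_2$, giving
\[
[N,M\otimes(W_1\otimes_\infty W_2)]=[N,(M\otimes W_1)\otimes W_2].
\]
Then I would apply Lemma \ref{t2.3.1} twice more in the other direction, first peeling off $W_2$ and then $W_1$:
\[
[N,(M\otimes W_1)\otimes W_2]=[N,M\otimes W_1]\otimes W_2=([N,M]\otimes W_1)\otimes W_2.
\]
Composing these equalities yields the required identity $[N,M]\otimes(W_1\otimes_\infty W_2)=([N,M]\otimes W_1)\otimes W_2$.

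The only subtlety is naturality in $W_1$ and $W_2$: the conclusion is not a bare isomorphism but a natural one. However, each step above is a natural identification — Lemma \ref{t2.3.1} is natural in $W$, and the Weil exponentiability of $M$ is, by definition, a naturality statement in $W_1$ and $W_2$ — so the composite is automatically natural. I do not foresee a genuine obstacle here; the argument is essentially a two-line diagram chase, and the main thing to be careful about is applying Lemma \ref{t2.3.1} in the correct direction at each step and keeping the roles of $W_1$ and $W_2$ straight.
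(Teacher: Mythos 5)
Your proposal is correct and is essentially identical to the paper's own proof: both apply Lemma \ref{t2.3.1} to pull $[N,\cdot]$ outside the prolongation by $W_1\otimes_\infty W_2$, invoke the Weil exponentiability of $M$ inside the bracket, and then apply Lemma \ref{t2.3.1} twice more to peel off $W_2$ and $W_1$. Nothing further is needed.
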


\begin{proof}
For any Weil algebras $W_{1}$ and $W_{2}$, we have
\begin{align*}
&  \left[  N,M\right]  \otimes(W_{1}\otimes_{\infty}W_{2})\\
&  =\left[  N,M\otimes(W_{1}\otimes_{\infty}W_{2})\right] \\
&  \text{[by Lemma \ref{t2.3.1}]}\\
&  =\left[  N,(M\otimes W_{1})\otimes W_{2}\right] \\
&  =\left[  N,M\otimes W_{1}\right]  \otimes W_{2}\\
&  \text{[by Lemma \ref{t2.3.1}]}\\
&  =\left(  \left[  N,M\right]  \otimes W_{1}\right)  \otimes W_{2}%
\end{align*}

\end{proof}

\begin{proposition}
\label{t2.3.3}If a Fr\"{o}licher space $M$\ is Weil exponentiable, then so is
$M\otimes W$ for any Weil algebra $W$.
\end{proposition}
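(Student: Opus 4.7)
The plan is to reduce the Weil exponentiability of $M\otimes W$ directly to that of $M$, using the associativity of $\otimes_{\infty}$ on Weil algebras to shuffle $W$ together with $W_{1},W_{2}$ into a single iterated tensor product.

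Fix Weil algebras $W_{1}$ and $W_{2}$. The goal is to establish
\[
(M\otimes W)\otimes(W_{1}\otimes_{\infty}W_{2})=((M\otimes W)\otimes W_{1})\otimes W_{2}
\]
naturally. First I would apply equation (\ref{2.3.1}) in the reverse direction, using Weil exponentiability of $M$ with the pair of Weil algebras $(W,\,W_{1}\otimes_{\infty}W_{2})$, to rewrite the left-hand side as $M\otimes\bigl(W\otimes_{\infty}(W_{1}\otimes_{\infty}W_{2})\bigr)$. Next I would invoke the associativity of $\otimes_{\infty}$ (which holds in $\mathbf{W}$ because $\otimes_{\infty}$ is the coproduct there, or equivalently the completed tensor product of Weil algebras) to identify this with $M\otimes\bigl((W\otimes_{\infty}W_{1})\otimes_{\infty}W_{2}\bigr)$.

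Now I would apply (\ref{2.3.1}) a second time, to the pair $(W\otimes_{\infty}W_{1},\,W_{2})$, obtaining $\bigl(M\otimes(W\otimes_{\infty}W_{1})\bigr)\otimes W_{2}$. A final application of (\ref{2.3.1}) to the pair $(W,W_{1})$, tucked inside the outer $\otimes W_{2}$, yields $\bigl((M\otimes W)\otimes W_{1}\bigr)\otimes W_{2}$, which is precisely the right-hand side.

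The only real content beyond bookkeeping is the associativity of $\otimes_{\infty}$ on Weil algebras, which I would treat as standard (it is the coproduct in $\mathbf{W}$, hence associative up to canonical isomorphism), and the naturality of each isomorphism provided by (\ref{2.3.1}); naturality in the composite is then automatic. I do not anticipate a genuine obstacle: the argument is structurally identical to Proposition \ref{t2.3.2}, with associativity of $\otimes_{\infty}$ playing the role that Lemma \ref{t2.3.1} plays there.
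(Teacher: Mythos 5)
Your proposal is correct and follows exactly the same chain of identifications as the paper's own proof: rewrite $(M\otimes W)\otimes(W_{1}\otimes_{\infty}W_{2})$ as $M\otimes\left(W\otimes_{\infty}(W_{1}\otimes_{\infty}W_{2})\right)$, use associativity of $\otimes_{\infty}$, and then apply (\ref{2.3.1}) twice more to peel off $W_{2}$ and $W_{1}$. No differences worth noting.
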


\begin{proof}
For any Weil algebras $W_{1}$ and $W_{2}$, we have
\begin{align*}
&  \left(  M\otimes W\right)  \otimes(W_{1}\otimes_{\infty}W_{2})\\
&  =M\otimes\left(  W\otimes_{\infty}(W_{1}\otimes_{\infty}W_{2})\right) \\
&  =M\otimes\left(  \left(  W\otimes_{\infty}W_{1}\right)  \otimes_{\infty
}W_{2}\right) \\
&  =\left(  M\otimes\left(  W\otimes_{\infty}W_{1}\right)  \right)  \otimes
W_{2}\\
&  =\left(  \left(  M\otimes W\right)  \otimes W_{1}\right)  \otimes W_{2}%
\end{align*}

\end{proof}

\begin{proposition}
\label{t2.3.4}If Fr\"{o}licher spaces $M$\ and $N$ are Weil exponentiable,
then so is $M\times N$.
\end{proposition}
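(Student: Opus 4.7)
The plan is to unwind the definition of Weil exponentiability for $M\times N$ by exploiting the fact, recalled just before Lemma \ref{t2.3.1}, that the functor $\cdot\otimes W:\mathbf{FS}\rightarrow\mathbf{FS}$ preserves finite products for every Weil algebra $W$. Given arbitrary Weil algebras $W_1$ and $W_2$, my strategy is to alternate between two moves: distribute Weil prolongation across the cartesian product (using product-preservation), and rearrange iterated Weil prolongations on each factor separately (using the Weil exponentiability of $M$ and of $N$).

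Concretely, I would begin with $(M\times N)\otimes(W_1\otimes_\infty W_2)$ and apply product-preservation of $\cdot\otimes(W_1\otimes_\infty W_2)$ to rewrite it as $\bigl(M\otimes(W_1\otimes_\infty W_2)\bigr)\times\bigl(N\otimes(W_1\otimes_\infty W_2)\bigr)$. Then I invoke the hypothesis that $M$ and $N$ are Weil exponentiable to replace each factor by $(M\otimes W_1)\otimes W_2$ and $(N\otimes W_1)\otimes W_2$ respectively. Next I pull the outer $\otimes W_2$ out of the product, again by product-preservation applied to $\cdot\otimes W_2$, obtaining $\bigl((M\otimes W_1)\times(N\otimes W_1)\bigr)\otimes W_2$. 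A final application of product-preservation of $\cdot\otimes W_1$ rewrites the inner product as $(M\times N)\otimes W_1$, yielding $\bigl((M\times N)\otimes W_1\bigr)\otimes W_2$, which is exactly what is required.

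The argument is essentially a four-step chain of equalities and presents no genuine obstacle; the only thing to keep track of is that each equality is natural in $W_1$ and $W_2$. This naturality follows automatically because product-preservation of $\cdot\otimes W$ is given as a natural isomorphism and because Weil exponentiability of $M$ and $N$ is assumed to hold naturally. Thus the composite isomorphism inherits naturality in both Weil-algebra arguments, and $M\times N$ is Weil exponentiable.
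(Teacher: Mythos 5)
Your proposal is correct and follows exactly the same chain of equalities as the paper's own proof: distribute $\cdot\otimes(W_{1}\otimes_{\infty}W_{2})$ over the product, apply the Weil exponentiability of $M$ and $N$ factorwise, and then use product-preservation of $\cdot\otimes W_{2}$ and $\cdot\otimes W_{1}$ to reassemble $\left(\left(M\times N\right)\otimes W_{1}\right)\otimes W_{2}$. Your added remark on naturality is consistent with what the paper leaves implicit.
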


\begin{proof}
For any Weil algebras $W_{1}$ and $W_{2}$, we have
\begin{align*}
&  \left(  M\times N\right)  \otimes(W_{1}\otimes_{\infty}W_{2})\\
&  =\left(  M\otimes(W_{1}\otimes_{\infty}W_{2})\right)  \times\left(
N\otimes(W_{1}\otimes_{\infty}W_{2})\right) \\
\text{\lbrack since the functor }\cdot\otimes(W_{1}\otimes_{\infty}W_{2})  &
:\mathbf{FS\rightarrow FS}\text{ is product-preserving]}\\
&  =\left(  (M\otimes W_{1})\otimes W_{2}\right)  \times\left(  (N\otimes
W_{1})\otimes W_{2}\right) \\
&  =\left(  (M\otimes W_{1})\times(N\otimes W_{1})\right)  \otimes W_{2}\\
\text{\lbrack since the functor }\cdot\otimes W_{2}  &  :\mathbf{FS\rightarrow
FS}\text{ is product-preserving]}\\
&  =\left(  \left(  M\times N\right)  \otimes W_{1}\right)  \otimes W_{2}%
\end{align*}

\end{proof}

Our present notion of Weil exponentiability is essentially the same as the one
in \cite{nishi1}, as the following proposition shows.

\begin{proposition}
\label{t2.3.5}A Fr\"{o}licher space $M$\ is Weil exponentiable iff
\begin{equation}
\left[  N,M\otimes(W_{1}\otimes_{\infty}W_{2})\right]  =\left[  N,M\otimes
W_{1}\right]  \otimes W_{2}\label{2.3.2}%
\end{equation}
holds naturally for any Fr\"{o}licher space $N$ and any Weil algebras $W_{1}$
and $W_{2}$.
\end{proposition}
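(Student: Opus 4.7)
The plan is to prove both directions separately, using Lemma~\ref{t2.3.1} to bridge between expressions of the form $[N,M\otimes W]$ and $[N,M]\otimes W$.

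For the forward direction, assume $M$ is Weil exponentiable. I would start from the left-hand side of (\ref{2.3.2}) and apply Weil exponentiability of $M$ (i.e., identity (\ref{2.3.1})) inside the brackets to obtain
\[
\left[N,M\otimes(W_{1}\otimes_{\infty}W_{2})\right]=\left[N,(M\otimes W_{1})\otimes W_{2}\right].
\]
Then I would invoke Lemma~\ref{t2.3.1} (read right to left) with the Fr\"olicher space $M\otimes W_1$ in place of $M$ and the Weil algebra $W_2$ in place of $W$ to conclude
\[
\left[N,(M\otimes W_{1})\otimes W_{2}\right]=\left[N,M\otimes W_{1}\right]\otimes W_{2},
\]
which is the desired identity. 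Naturality in $N$, $W_1$, $W_2$ is inherited from the naturality of both (\ref{2.3.1}) and Lemma~\ref{t2.3.1}.

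For the converse, assume (\ref{2.3.2}) holds naturally for every $N$, $W_1$, $W_2$. The key idea is to specialize $N$ to the terminal object $\mathbf{1}$ of $\mathbf{FS}$: since $\mathbf{FS}$ is cartesian closed (Theorem~\ref{t2.1.1}), we have $[\mathbf{1},X]\cong X$ canonically for every Fr\"olicher space $X$. Substituting $N=\mathbf{1}$ into (\ref{2.3.2}) yields
\[
M\otimes(W_{1}\otimes_{\infty}W_{2})=[\mathbf{1},M\otimes(W_{1}\otimes_{\infty}W_{2})]=[\mathbf{1},M\otimes W_{1}]\otimes W_{2}=(M\otimes W_{1})\otimes W_{2},
\]
which is exactly (\ref{2.3.1}), and the naturality in $W_1,W_2$ survives the specialization.

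Neither direction requires real computational effort; the only mildly delicate point is the backward direction, where one must be confident that the isomorphism $[\mathbf{1},X]\cong X$ is natural enough to preserve the naturality of the identity in $W_1$ and $W_2$ after setting $N=\mathbf{1}$. This is immediate from the universal property of the terminal object, so I do not expect any genuine obstacle.
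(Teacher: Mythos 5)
Your proof is correct. The converse direction (setting $N=\mathbf{1}$) is exactly what the paper does. The forward direction reaches the same conclusion by a slightly different decomposition: you apply the identity (\ref{2.3.1}) \emph{inside} the functor $\left[N,\cdot\right]$, rewriting the left-hand side as $\left[N,(M\otimes W_{1})\otimes W_{2}\right]$, and then use Lemma \ref{t2.3.1} once, with $M\otimes W_{1}$ playing the role of the Fr\"{o}licher space. The paper instead first pulls $W_{1}\otimes_{\infty}W_{2}$ out of the brackets via Lemma \ref{t2.3.1}, invokes Proposition \ref{t2.3.2} (Weil exponentiability of $\left[N,M\right]$) to split it, and pushes $W_{1}$ back in with a second application of Lemma \ref{t2.3.1}. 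Your route is marginally more economical, needing only the definition of Weil exponentiability and a single use of Lemma \ref{t2.3.1}; the paper's route has the small expository benefit of reusing Proposition \ref{t2.3.2}, making visible that the exponential $\left[N,M\right]$ is itself Weil exponentiable. Both arguments are sound, and your remark on naturality surviving the specialization $N=\mathbf{1}$ is the right (and only) point of care in the converse.
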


\begin{proof}
By taking $N=1$ in (\ref{2.3.2}), we can see that (\ref{2.3.2}) implies
(\ref{2.3.1}). To see the converse, it suffices to note that
\begin{align*}
&  \left[  N,M\otimes(W_{1}\otimes_{\infty}W_{2})\right] \\
&  =\left[  N,M\right]  \otimes(W_{1}\otimes_{\infty}W_{2})\\
&  \text{[by Lemma \ref{t2.3.1}]}\\
&  =\left(  \left[  N,M\right]  \otimes W_{1}\right)  \otimes W_{2}\\
&  \text{[by Proposition \ref{t2.3.2}]}\\
&  =\left[  N,M\otimes W_{1}\right]  \otimes W_{2}\\
&  \text{[by Lemma \ref{t2.3.1}]}%
\end{align*}

\end{proof}

\begin{theorem}
\label{t2.3.6}The category $\mathbf{FS}_{\mathrm{WE}}$ of Weil exponentiable
Fr\"{o}licher spaces and their smooth mappings ($\mathbf{FS}_{\mathrm{ML}}$ of
microlinear Fr\"{o}licher spaces and their smooth mappings, $\mathbf{FS}%
_{\mathrm{WE,ML}}$ of Weil exponentiable and microlinear Fr\"{o}licher spaces
and their smooth mappings, respectively) is cartesian closed.
\end{theorem}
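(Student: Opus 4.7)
The plan is to reduce all three cases to a single pattern: each is a full subcategory of $\mathbf{FS}$, so by Theorem \ref{t2.1.1} it will inherit a cartesian closed structure as soon as it is closed under the terminal object, binary products, and exponentials computed in $\mathbf{FS}$. The terminal object $1$ trivially belongs to each subcategory because $1\otimes W=1$ for every Weil algebra $W$ and because the only finite limit diagram over $1$ is the trivial one.

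For $\mathbf{FS}_{\mathrm{WE}}$ the substance is already in hand: Proposition \ref{t2.3.4} gives closure under binary products, and Proposition \ref{t2.3.2} gives closure under exponentials $[N,M]$ whenever $M$ is Weil exponentiable. Note that only the target $M$ is required to be Weil exponentiable, so the base $N$ can be any Fr\"{o}licher space at all, which is strictly stronger than what the cartesian closed structure demands.

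For $\mathbf{FS}_{\mathrm{ML}}$ I would argue as follows. Given microlinear $M$ and $N$ and a finite limit diagram $\mathbb{D}$ in $\mathbf{W}$, both $M\otimes\mathbb{D}$ and $N\otimes\mathbb{D}$ are limit diagrams in $\mathbf{FS}$ by hypothesis. Because the functor $\cdot\otimes W:\mathbf{FS}\rightarrow\mathbf{FS}$ is product-preserving for each $W$, the diagram $(M\times N)\otimes\mathbb{D}$ is isomorphic vertex-by-vertex to the product of $M\otimes\mathbb{D}$ and $N\otimes\mathbb{D}$, and a product of two limit diagrams is again a limit diagram; hence $M\times N$ is microlinear. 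For exponentials, Lemma \ref{t2.3.1} gives $[N,M]\otimes\mathbb{D}=[N,M\otimes\mathbb{D}]$, and the functor $[N,\cdot]:\mathbf{FS}\rightarrow\mathbf{FS}$ preserves all limits since it is a right adjoint in the cartesian closed category $\mathbf{FS}$; therefore $[N,M\otimes\mathbb{D}]$ is a limit diagram whenever $M\otimes\mathbb{D}$ is, so $[N,M]$ is microlinear whenever $M$ is.

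The case $\mathbf{FS}_{\mathrm{WE,ML}}$ is then an immediate combination of the previous two cases, applied componentwise. I do not anticipate any genuine obstacle here; the only point demanding care is the microlinearity of $[N,M]$, which silently rests on both Lemma \ref{t2.3.1} and the right-adjoint property of $[N,\cdot]$, and these two ingredients should be named explicitly.
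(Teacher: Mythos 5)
Your proposal is correct and follows essentially the same route as the paper: the paper's proof simply invokes Theorem \ref{t2.1.1} together with Propositions \ref{t2.3.2} and \ref{t2.3.4} for the Weil-exponentiable case and declares the microlinear cases ``similar,'' which is exactly the closure-under-products-and-exponentials reduction you carry out. The only difference is that you spell out the ``similar'' part (product-preservation of $\cdot\otimes W$ plus commutation of limits for products, and Lemma \ref{t2.3.1} plus the right-adjointness of $[N,\cdot]$ for exponentials), which matches the arguments the paper delegates to its reference \cite{nishi2}.
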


\begin{proof}
The case of $\mathbf{FS}_{\mathrm{WE}}$\ follows from Theorem \ref{t2.1.1} and
Propositions \ref{t2.3.2}\ and \ref{t2.3.4}. The remaining two cases can be
dealt with similarly.
\end{proof}

\section{Relativized Weil Prolongation}

Now we would like to relativize the notion of Weil prolongation. Since our
discussion is parallel to Section 3 of \cite{nishi1}, we can be brief. Let
$\pi_{M}^{E}:E\rightarrow M$ be a smooth mapping of Fr\"{o}licher spaces.
Given a Weil Algebra $W=\mathcal{C}^{\infty}(\mathbb{R}^{n})/I$, we will
construct the \textit{Weil prolongation} $\pi_{M}^{E\otimes_{M}W}=\pi_{M}%
^{E}\overrightarrow{\otimes}W:E\otimes_{M}W\rightarrow M$ \textit{of} the
mapping $\pi_{M}^{E}:E\rightarrow M$ \textit{with respect to} $W$. We will
first define $E\otimes_{M}W$ set-theoretically. We define an equivalence
relation $\equiv\operatorname{mod}\ I$ on $\mathcal{C}^{\infty}(\mathbb{R}%
^{n},E)_{M}=\left\{  f\in\mathcal{C}^{\infty}(\mathbb{R}^{n},E)\mid\pi_{M}%
^{E}\circ f\text{ is constant}\right\}  $\ to be
\begin{align*}
f  &  \equiv g\quad\operatorname{mod}\ I\\
&  \text{iff}\\
f(0,...,0)  &  =g(0,...,0)\text{ and}\\
\chi\circ f-\chi\circ g  &  \in I\text{ for every }\chi\in\mathcal{C}^{\infty
}(E,\mathbb{R})
\end{align*}
for any $f,g\in\mathcal{C}^{\infty}(\mathbb{R}^{n},E)_{M}$. The totality of
equivalence classes with respect to the equivaleence relation $\equiv
\operatorname{mod}\ I$ is denoted by $E\otimes_{M}W$, which has the canonical
projection $\pi_{M}^{E\otimes_{M}W}:E\otimes_{M}W\rightarrow M$. This
construction of $E\otimes_{M}W$ can naturally be extended to a functor
$\cdot\overrightarrow{\otimes}W:\mathbf{FS}^{\rightarrow}\mathbf{\rightarrow
Sets}^{\rightarrow}$, where the category $\mathbf{FS}^{\rightarrow}$ is the
category of diagrams in $\mathbf{FS}$\ over the underlying category
\[
\cdot\rightarrow\cdot
\]
and similarly for $\mathbf{Sets}^{\rightarrow}$. In other words, given a
morphism
\[%
\begin{array}
[c]{ccccc}
&  & \varphi &  & \\
& E & \rightarrow & F & \\
\pi_{M}^{E} & \downarrow &  & \downarrow & \pi_{N}^{F}\\
& M & \rightarrow & N & \\
&  & \underline{\varphi} &  &
\end{array}
\]
in the category $\mathbf{FS}^{\rightarrow}$, we have its induced morphism
\[%
\begin{array}
[c]{ccccc}
&  & \varphi\overrightarrow{\otimes}W &  & \\
& E\otimes_{M}W & \rightarrow & F\otimes_{N}W & \\
\pi_{M}^{E\otimes_{M}W} & \downarrow &  & \downarrow & \pi_{N}^{F\otimes_{N}%
W}\\
& M & \rightarrow & N & \\
&  & \underline{\varphi} &  &
\end{array}
\]
in the category $\mathbf{Sets}^{\rightarrow}$. We note that if $M=1$, then
$E\otimes_{M}W$\ is no other than $E\otimes W$, whose construction has already
been discussed in Section 3 of \cite{nishi1}. We endow $E\otimes_{M}W $ with
the initial smooth structure with respect to the mappings
\[
X\otimes_{M}W\overset{\chi\overrightarrow{\otimes}W}{\rightarrow}%
\mathbb{R}\otimes W
\]
where $\chi$ ranges over $\mathcal{C}^{\infty}(X,\mathbb{R})$, and
$\mathbb{R}$\ with the canonical smooth structure is also regarded as an
object $\mathbb{R}\rightarrow1$ in the category $\mathbf{FS}^{\rightarrow}$.
By doing so, we can lift the functor $\cdot\overrightarrow{\otimes
}W:\mathbf{FS}^{\rightarrow}\rightarrow\mathbf{Sets}^{\rightarrow}$ to the
functor $\mathbf{FS}^{\rightarrow}\rightarrow\mathbf{FS}^{\rightarrow}$. As in
Proposition 5 of \cite{nishi1}, we finally have the bifunctor $\overrightarrow
{\otimes}:\mathbf{FS}^{\rightarrow}\times\mathbf{W\rightarrow FS}%
^{\rightarrow}$, which gives rise to a bifunctor $\otimes_{M}:\mathbf{FS}%
/M\times\mathbf{W\rightarrow}$ $\mathbf{FS}/M$ by restriction of the category
$\mathbf{FS}^{\rightarrow}$ to its faithful subcategory $\mathbf{FS}/M$ (the
slice category of $\mathbf{FS}$\ over $M$).

As in Theorem 10 of \cite{nishi1}, we have

\begin{theorem}
\label{t3.1}Given a Weil algebra $W$, the functor $\cdot\otimes_{M}%
W:\mathbf{FS}/M\mathbf{\rightarrow FS}/M$ is product-preserving.
\end{theorem}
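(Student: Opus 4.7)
The plan is to imitate the absolute product-preservation argument (Theorem 10 of \cite{nishi1}) within the slice category $\mathbf{FS}/M$. The binary product in $\mathbf{FS}/M$ of $\pi_{M}^{E}:E\rightarrow M$ and $\pi_{M}^{F}:F\rightarrow M$ is the fibered product $E\times_{M}F$, viewed as a Fr\"{o}licher subspace of $E\times F$ with the obvious projection to $M$. The two projections $p_{1}:E\times_{M}F\rightarrow E$ and $p_{2}:E\times_{M}F\rightarrow F$ are morphisms in $\mathbf{FS}/M$; functoriality of $\cdot\otimes_{M}W$ together with the universal property of the fibered product furnish a canonical comparison morphism $\Phi:(E\times_{M}F)\otimes_{M}W\rightarrow(E\otimes_{M}W)\times_{M}(F\otimes_{M}W)$ over $M$, given on representatives by $\Phi([h])=([p_{1}\circ h],[p_{2}\circ h])$. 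The task is to show that $\Phi$ is an isomorphism in $\mathbf{FS}/M$.

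First I would establish that $\Phi$ is a bijection on underlying sets. For surjectivity, given a pair $([f],[g])$ in the codomain, the fibered-product condition forces $\pi_{M}^{E}\circ f$ and $\pi_{M}^{F}\circ g$ to be constant with a common value; hence $(f,g):\mathbb{R}^{n}\rightarrow E\times F$ actually factors through $E\times_{M}F$ and yields a preimage. Injectivity reduces to matching the equivalence relations: $(f,g)$ and $(f',g')$ represent the same class in $(E\times_{M}F)\otimes_{M}W$ iff $f,f'$ represent the same class in $E\otimes_{M}W$ and $g,g'$ represent the same class in $F\otimes_{M}W$. One direction is immediate: pulling back any $\chi_{E}\in\mathcal{C}^{\infty}(E,\mathbb{R})$ along the smooth projection $p_{1}$ gives a structure function on $E\times_{M}F$, from which the termwise mod-$I$ conditions follow at once.

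The main obstacle is the converse implication. My strategy is to reduce to Theorem 10 of \cite{nishi1} applied to the ambient Fr\"{o}licher space $E\times F$: since the relative equivalence on $E\otimes_{M}W$ is the restriction of the absolute equivalence on $E\otimes W$ to curves with constant $M$-projection, and similarly for $F$, the termwise hypotheses promote, via the absolute product-preservation, to $\tilde{\chi}\circ(f,g)-\tilde{\chi}\circ(f',g')\in I$ for every $\tilde{\chi}\in\mathcal{C}^{\infty}(E\times F,\mathbb{R})$. To upgrade this to an arbitrary structure function $\chi:E\times_{M}F\rightarrow\mathbb{R}$ on the subspace, I would exploit that Fr\"{o}licher structure functions on a subspace are determined by their behaviour on structure curves into that subspace, and that the composites $\chi\circ(f,g)$ and $\chi\circ(f',g')$ are manifestly such composites; chasing the initial smooth structure on $E\times_{M}F$ then allows me to express the difference $\chi\circ(f,g)-\chi\circ(f',g')$ through the ambient data and conclude that it lies in $I$.

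Finally, I would verify that $\Phi$ is a diffeomorphism by observing that both $(E\times_{M}F)\otimes_{M}W$ and $(E\otimes_{M}W)\times_{M}(F\otimes_{M}W)$ carry the initial smooth structure with respect to the pullbacks of structure functions along $p_{1}\overrightarrow{\otimes}W$ and $p_{2}\overrightarrow{\otimes}W$, and that $\Phi$ exchanges these pullbacks correctly. Naturality in $E$ and $F$ then follows from functoriality. The crux is therefore the matching of equivalence relations, which rests on combining the absolute product-preservation theorem with the subspace structure of $E\times_{M}F$ inside $E\times F$; once this hurdle is cleared, the remaining verifications parallel the absolute case essentially verbatim.
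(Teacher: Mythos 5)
The paper itself gives no proof of Theorem \ref{t3.1}; it merely asserts that the argument of Theorem 10 of \cite{nishi1} (the absolute case $M=1$) carries over, so your proposal has to be judged on its own merits rather than against an in-paper argument. Your architecture is the right one: build the comparison morphism $\Phi$, prove bijectivity on underlying sets, then match the smooth structures. The surjectivity argument and the easy direction of injectivity are correct as you state them.

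The gap sits exactly where you locate the crux, and your proposed resolution does not close it. The Fr\"{o}licher structure on $E\times_{M}F$ is the initial structure with respect to the two projections, so $\mathcal{C}^{\infty}(E\times_{M}F,\mathbb{R})$ is the \emph{saturation} of the family $\{\chi_{E}\circ p_{1},\chi_{F}\circ p_{2}\}$: it consists of all functions whose composite with every structure curve of $E\times_{M}F$ is smooth, and such a function need not be the restriction of a structure function on $E\times F$, nor a smooth functional combination of pulled-back structure functions of $E$ and $F$. For a combination $\chi=h(\chi_{1}\circ p_{1},\dots,\chi_{m}\circ p_{2})$ with $h$ smooth, the mod-$I$ condition does follow from the termwise hypotheses via Hadamard's lemma, because $I$ is an ideal; but for a structure function produced only by the saturation, knowing $\tilde{\chi}\circ(f,g)-\tilde{\chi}\circ(f',g')\in I$ for all ambient $\tilde{\chi}$ gives no control whatsoever on $\chi\circ(f,g)-\chi\circ(f',g')$. ``Chasing the initial smooth structure'' only identifies the structure curves of $E\times_{M}F$; it does not express an arbitrary structure function of the subspace through ambient data, which is what your step requires. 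The same unproved identification recurs in your final step, since the initial structure on $(E\otimes_{M}W)\times_{M}(F\otimes_{M}W)$ is generated by the pulled-back family alone while that on $(E\times_{M}F)\otimes_{M}W$ uses all of $\mathcal{C}^{\infty}(E\times_{M}F,\mathbb{R})$. What is missing is a lemma asserting that the relation $\equiv\operatorname{mod}I$ (and the resulting initial structure) computed from a generating family of functions coincides with that computed from its saturation; that is the real content behind Theorem 10 of \cite{nishi1}, and in the relative setting it must be proved, or at least precisely cited, rather than gestured at.
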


From now on through the end of this section, $M$ shall be a Weil exponentiable
and microlinear Fr\"{o}licher space. Now we are going to determine the Weil
prolongation $\pi_{M}^{\left(  M\otimes W_{1}\right)  \otimes_{M}W_{2}%
}:\left(  M\otimes W_{1}\right)  \otimes_{M}W_{2}\rightarrow M$ of the
canonical projection $\pi_{M}^{M\otimes W_{1}}:M\otimes W_{1}\rightarrow M$
with respect to $W_{2}$, where $W_{1}$\ and $W_{2}$\ are arbitrary Weil algebras.

\begin{notation}
Given Weil algebras $W_{1}$\ and $W_{2}$, the equalizer of $\mathcal{W}%
_{d\in\mathcal{D}_{W_{2}}\mapsto(0,d)\in\mathcal{D}_{W_{1}}\times
\mathcal{D}_{W_{2}}}:W_{1}\otimes_{\infty}W_{2}\rightarrow W_{2}$ and
$\mathcal{W}_{d\in\mathcal{D}_{W_{2}}\mapsto(0,0)\in\mathcal{D}_{W_{1}}%
\times\mathcal{D}_{W_{2}}}:W_{1}\otimes_{\infty}W_{2}\rightarrow W_{2}$ in the
category $\mathbf{W}$\ is denoted by $W_{1}\widetilde{\otimes}_{\infty}W_{2}$.
\end{notation}

Now we are going to determine $\mathcal{W}_{D}\widetilde{\otimes}_{\infty
}\mathcal{W}_{D}$ by way of example. The following lemma should be obvious.

\begin{lemma}
\label{t3.2}The diagram
\[
\mathcal{W}_{D(2)}
\begin{array}
[c]{c}%
\mathcal{W}_{\left(  d_{1},d_{2}\right)  \in D\times D\mapsto\left(
d_{1},d_{1}d_{2}\right)  \in D(2)}\\
\rightarrow\\
\,
\end{array}
\mathcal{W}_{D\times D}
\begin{array}
[c]{c}%
\mathcal{W}_{d\in D\mapsto(0,d)\in D\times D}\\
\rightrightarrows\\
\mathcal{W}_{d\in D\mapsto(0,0)\in D\times D}%
\end{array}
\mathcal{W}_{D}%
\]
is an equalizer diagram in $\mathbf{W}$.
\end{lemma}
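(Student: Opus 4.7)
The plan is to unfold the Weil algebras as polynomial quotients and verify the equalizer condition by a direct computation in commutative algebra, keeping careful track of the contravariant duality between $\mathbf{W}$ and the category of infinitesimal objects.

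First I would fix notation. Writing $\mathcal{W}_{D}=\mathbb{R}[X]/(X^{2})$, $\mathcal{W}_{D\times D}=\mathbb{R}[X',Y']/(X'^{2},Y'^{2})$, and $\mathcal{W}_{D(2)}=\mathbb{R}[X,Y]/(X^{2},Y^{2},XY)$, I would translate each of the maps in the diagram into its opposite arrow of $\mathbb{R}$-algebras. The arrow $\mathcal{W}_{(d_{1},d_{2})\mapsto(d_{1},d_{1}d_{2})}$ becomes the homomorphism $\iota:\mathcal{W}_{D(2)}\to\mathcal{W}_{D\times D}$ with $\iota(X)=X'$ and $\iota(Y)=X'Y'$ (well-defined since $X'^{2}=Y'^{2}=0$ imply $(X'Y')^{2}=0$ and $X'\cdot X'Y'=0$). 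The two parallel arrows $\mathcal{W}_{d\mapsto(0,d)}$ and $\mathcal{W}_{d\mapsto(0,0)}$ become the homomorphisms $p_{1},p_{2}:\mathcal{W}_{D\times D}\to\mathcal{W}_{D}$ with $p_{1}(X')=0,\,p_{1}(Y')=X$ and $p_{2}(X')=p_{2}(Y')=0$, respectively.

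Next I would verify that $p_{1}\circ\iota=p_{2}\circ\iota$: both send $X\mapsto0$ and $Y=X'Y'\mapsto 0\cdot X=0=0\cdot0$, so both compositions are the augmentation. Then, for the universal property, I would take any Weil algebra $V$ with a homomorphism $\phi:V\to\mathcal{W}_{D\times D}$ satisfying $p_{1}\circ\phi=p_{2}\circ\phi$, write $\phi(v)=a(v)+b_{1}(v)X'+b_{2}(v)Y'+c(v)X'Y'$, and observe that the equalizing condition forces $b_{2}(v)X=0$ in $\mathcal{W}_{D}$, hence $b_{2}\equiv0$. Thus the image of $\phi$ lies in the $\mathbb{R}$-subalgebra generated by $X'$ and $X'Y'$, which is exactly the image of $\iota$. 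A unique factorization $\tilde{\phi}:V\to\mathcal{W}_{D(2)}$ through $\iota$ is then obtained by setting $\tilde{\phi}(v)=a(v)+b_{1}(v)X+c(v)Y$; one checks this is an $\mathbb{R}$-algebra homomorphism (the relations $X^{2}=Y^{2}=XY=0$ are respected because the corresponding identities $X'^{2}=(X'Y')^{2}=X'\cdot X'Y'=0$ hold in $\mathcal{W}_{D\times D}$) and that it preserves the maximal ideal since $\phi$ does. Uniqueness of $\tilde{\phi}$ is immediate because $\iota$ is injective, being a surjection from $\mathbb{R}[X,Y]/(X^{2},Y^{2},XY)$ onto the subalgebra of $\mathcal{W}_{D\times D}$ spanned by $1,X',X'Y'$ which has the same $\mathbb{R}$-dimension $3$.

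The only genuinely delicate point will be bookkeeping of directions under the contravariant functor $\mathcal{W}$: one must not confuse the map $(d_{1},d_{2})\mapsto(d_{1},d_{1}d_{2})$ on the infinitesimal side with the resulting algebra map going the opposite way. Everything else is a finite-dimensional linear computation, which is why the author can legitimately call the statement obvious.
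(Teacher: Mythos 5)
Your computation is correct, and it is exactly the verification the paper waves at: the author simply declares the lemma ``obvious'' and supplies no proof, the intended argument being precisely the finite-dimensional check that the subalgebra of $\mathbb{R}[X',Y']/(X'^{2},Y'^{2})$ equalizing the two maps is the span of $1,X',X'Y'$, which $\iota$ identifies with $\mathbb{R}[X,Y]/(X^{2},Y^{2},XY)$. Your handling of the contravariance, the well-definedness of $\iota$, and the universal property (including preservation of maximal ideals) is all sound, so your write-up can stand as the missing proof.
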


Therefore, we have

\begin{proposition}
\label{t3.3}$\mathcal{W}_{D}\widetilde{\otimes}_{\infty}\mathcal{W}_{D}$ is no
other than $\mathcal{W}_{D(2)}$.
\end{proposition}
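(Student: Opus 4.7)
The plan is to observe that this proposition is essentially a direct unpacking of the preceding notation together with Lemma \ref{t3.2}, so the real content is absorbed into that lemma. First I would recall the standard identification $\mathcal{W}_{D}\otimes_{\infty}\mathcal{W}_{D}=\mathcal{W}_{D\times D}$, which reflects the duality between Weil algebras and infinitesimal objects (product of infinitesimal objects corresponds to $\otimes_{\infty}$ of Weil algebras). Under this identification, the notation preceding the lemma defines $\mathcal{W}_{D}\widetilde{\otimes}_{\infty}\mathcal{W}_{D}$ to be the equalizer in $\mathbf{W}$ of the two morphisms
\[
\mathcal{W}_{d\in D\mapsto(0,d)\in D\times D},\quad \mathcal{W}_{d\in D\mapsto(0,0)\in D\times D}:\mathcal{W}_{D\times D}\rightrightarrows\mathcal{W}_{D}.
\]
But this is precisely the parallel pair that Lemma \ref{t3.2} equalizes by $\mathcal{W}_{D(2)}$ via $\mathcal{W}_{(d_{1},d_{2})\in D\times D\mapsto(d_{1},d_{1}d_{2})\in D(2)}$, so the result follows at once.

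If a reader wants to see Lemma \ref{t3.2} checked explicitly, one can argue computationally. Write $\mathcal{W}_{D\times D}=\mathbb{R}[X_{1},X_{2}]/(X_{1}^{2},X_{2}^{2})$, which has the basis $\{1,X_{1},X_{2},X_{1}X_{2}\}$. The first morphism sends $X_{1}\mapsto 0$, $X_{2}\mapsto X$; the second sends both to $0$. A generic element $c+aX_{1}+bX_{2}+eX_{1}X_{2}$ is equalized iff $b=0$, so the equalizer subalgebra is $\mathbb{R}\oplus\mathbb{R}X_{1}\oplus\mathbb{R}X_{1}X_{2}$. Setting $Y_{1}=X_{1}$ and $Y_{2}=X_{1}X_{2}$ gives a canonical $\mathbb{R}$-algebra isomorphism with $\mathbb{R}[Y_{1},Y_{2}]/(Y_{1}^{2},Y_{2}^{2},Y_{1}Y_{2})=\mathcal{W}_{D(2)}$, and it transports to the morphism $\mathcal{W}_{(d_{1},d_{2})\mapsto(d_{1},d_{1}d_{2})}$ as asserted. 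The universal property in $\mathbf{W}$ (as opposed to merely in $\mathbb{R}$-algebras) is automatic: the inclusion preserves the maximal ideal, and any $\mathbb{R}$-algebra homomorphism into $\mathcal{W}_{D\times D}$ that equalizes the two arrows lands in this subalgebra and preserves maximal ideals if and only if the original did.

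The only place where care is needed — and the closest thing to an obstacle — is confirming that the equalizer taken in the category $\mathbf{W}$ of Weil algebras with maximal-ideal-preserving homomorphisms coincides with the naive equalizer in $\mathbb{R}$-algebras; this is what makes it legitimate to compute as above. Since the subalgebra found is finite-dimensional local with residue field $\mathbb{R}$, and since the universal morphism out of the equalizer automatically respects augmentations, no extra verification is needed beyond observing that $\mathcal{W}_{D(2)}$ is indeed a Weil algebra.
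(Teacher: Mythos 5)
Your proof is correct and follows the same route as the paper: by the definition of $\widetilde{\otimes}_{\infty}$ the object $\mathcal{W}_{D}\widetilde{\otimes}_{\infty}\mathcal{W}_{D}$ is the equalizer of the stated parallel pair, and Lemma \ref{t3.2} identifies that equalizer with $\mathcal{W}_{D(2)}$. Your explicit computation in $\mathbb{R}[X_{1},X_{2}]/(X_{1}^{2},X_{2}^{2})$ merely fills in the verification of Lemma \ref{t3.2}, which the paper declares obvious, and it checks out.
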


\begin{theorem}
\label{t3.4}Given two Weil algebras $W_{1}$\ and $W_{2}$, the Weil
prolongation $\pi_{M}^{\left(  M\otimes W_{1}\right)  \otimes_{M}W_{2}%
}:\left(  M\otimes W_{1}\right)  \otimes_{M}W_{2}\rightarrow M$ of the
canonical projection $\pi_{M}^{M\otimes W_{1}}:M\otimes W_{1}\rightarrow M$
with respect to $W_{2}$ is no other than $\pi_{M}^{M\otimes\left(
W_{1}\widetilde{\otimes}_{\infty}W_{2}\right)  }:M\otimes\left(
W_{1}\widetilde{\otimes}_{\infty}W_{2}\right)  \rightarrow M$.
\end{theorem}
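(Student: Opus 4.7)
The strategy is to rewrite $(M\otimes W_{1})\otimes W_{2}$ as $M\otimes(W_{1}\otimes_{\infty}W_{2})$ using Weil exponentiability of $M$, to identify $(M\otimes W_{1})\otimes_{M}W_{2}$ with a naturally-arising equalizer inside this space, and then to apply microlinearity of $M$ to turn the defining equalizer of $W_{1}\widetilde{\otimes}_{\infty}W_{2}$ into the claimed identification in $\mathbf{FS}/M$.

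For the equalizer identification, I would start from the set-theoretic definition of the relative prolongation and observe that the inclusion $\mathcal{C}^{\infty}(\mathbb{R}^{n_{2}},M\otimes W_{1})_{M}\hookrightarrow\mathcal{C}^{\infty}(\mathbb{R}^{n_{2}},M\otimes W_{1})$ induces a canonical injection
\[
(M\otimes W_{1})\otimes_{M}W_{2}\hookrightarrow (M\otimes W_{1})\otimes W_{2}=M\otimes(W_{1}\otimes_{\infty}W_{2}),
\]
where the last identification uses (\ref{2.3.1}). Under Weil exponentiability an element of the right-hand side is represented by a smooth $\phi:\mathbb{R}^{n_{1}+n_{2}}\to M$, and the constraint "$\pi_{M}^{M\otimes W_{1}}\circ f$ is constant" on a lifted representative translates into the equality
\[
\bigl(M\otimes\mathcal{W}_{d\mapsto(0,d)}\bigr)(\xi)=\bigl(M\otimes\mathcal{W}_{d\mapsto(0,0)}\bigr)(\xi)\quad\text{in }M\otimes W_{2},
\]
so that the image of the injection should be precisely the equalizer in $\mathbf{FS}$ of the pair $M\otimes(W_{1}\otimes_{\infty}W_{2})\rightrightarrows M\otimes W_{2}$ associated with the two Weil algebra maps appearing in the Notation preceding this theorem. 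Since all arrows are smooth over $M$, this is an equalizer in $\mathbf{FS}/M$.

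Finally, applying microlinearity of $M$ to the equalizer diagram $W_{1}\widetilde{\otimes}_{\infty}W_{2}\to W_{1}\otimes_{\infty}W_{2}\rightrightarrows W_{2}$ in $\mathbf{W}$ yields a limit diagram in $\mathbf{FS}$ whose vertex is $M\otimes(W_{1}\widetilde{\otimes}_{\infty}W_{2})$, giving the claimed isomorphism over $M$. The main obstacle I anticipate is verifying that every class in the equalizer admits a representative $\phi$ whose base projection $\phi(0,\cdot)$ is \emph{literally} constant, not merely constant modulo the defining ideal of $W_{2}$; since $M$ carries no linear structure one cannot naively subtract off the base-discrepancy, so the fix must exploit microlinearity of $M$ to normalize the representative, along the lines of Section~3 of \cite{nishi1}. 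Matching the initial smooth structures on both sides is then a routine bookkeeping task, using naturality of $\overrightarrow{\otimes}$ together with Theorem~\ref{t3.1}.
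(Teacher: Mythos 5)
Your plan coincides with the paper's own proof: the paper identifies $\left(M\otimes W_{1}\right)\otimes_{M}W_{2}$ with the equalizer of the pair $\mathrm{id}_{M}\otimes\mathcal{W}_{d\in\mathcal{D}_{W_{2}}\mapsto(0,d)\in\mathcal{D}_{W_{1}}\times\mathcal{D}_{W_{2}}}$ and $\mathrm{id}_{M}\otimes\mathcal{W}_{d\in\mathcal{D}_{W_{2}}\mapsto(0,0)\in\mathcal{D}_{W_{1}}\times\mathcal{D}_{W_{2}}}$ on $\left(M\otimes W_{1}\right)\otimes W_{2}=M\otimes\left(W_{1}\otimes_{\infty}W_{2}\right)$, and then uses microlinearity of $M$ to carry the defining equalizer of $W_{1}\widetilde{\otimes}_{\infty}W_{2}$ over to $\mathbf{FS}$, exactly as you propose. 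The only difference is one of emphasis: the step you single out as the main obstacle (that the relative prolongation really is that equalizer, i.e.\ that representatives can be normalized to have literally constant base projection) is simply asserted in the paper as ``easy to see,'' so your extra caution is reasonable but does not constitute a different route.
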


\begin{remark}
It is for this theorem that we need the notion of $W_{1}\widetilde{\otimes
}_{\infty}W_{2}$.
\end{remark}

\begin{proof}
It is easy to see that $\left(  M\otimes W_{1}\right)  \otimes_{M}W_{2}$\ is
no other than the equalizer of
\[
\left(  M\otimes W_{1}\right)  \otimes W_{2}=M\otimes\left(  W_{1}%
\otimes_{\infty}W_{2}\right)
\begin{array}
[c]{c}%
\mathrm{id}_{M}\otimes\mathcal{W}_{d\in\mathcal{D}_{W_{2}}\mapsto
(0,d)\in\mathcal{D}_{W_{1}}\times\mathcal{D}_{W_{2}}}\\
\rightrightarrows\\
\mathrm{id}_{M}\otimes\mathcal{W}_{d\in\mathcal{D}_{W_{2}}\mapsto
(0,0)\in\mathcal{D}_{W_{1}}\times\mathcal{D}_{W_{2}}}%
\end{array}
M\otimes W_{2}%
\]
However, since $M$\ is microlinear, the equalizer diagram
\[
W_{1}\widetilde{\otimes}_{\infty}W_{2}\rightarrow W_{1}\otimes_{\infty}W_{2}
\begin{array}
[c]{c}%
\mathcal{W}_{d\in\mathcal{D}_{W_{2}}\mapsto(0,d)\in\mathcal{D}_{W_{1}}%
\times\mathcal{D}_{W_{2}}}\\
\rightrightarrows\\
\mathcal{W}_{d\in\mathcal{D}_{W_{2}}\mapsto(0,0)\in\mathcal{D}_{W_{1}}%
\times\mathcal{D}_{W_{2}}}%
\end{array}
W_{2}%
\]
in the category $\mathbf{W}$\ naturally gives rise to the equalizer diagram
\[
M\otimes\left(  W_{1}\widetilde{\otimes}_{\infty}W_{2}\right)  \rightarrow
M\otimes\left(  W_{1}\otimes_{\infty}W_{2}\right)
\begin{array}
[c]{c}%
\mathrm{id}_{M}\otimes\mathcal{W}_{d\in\mathcal{D}_{W_{2}}\mapsto
(0,d)\in\mathcal{D}_{W_{1}}\times\mathcal{D}_{W_{2}}}\\
\rightrightarrows\\
\mathrm{id}_{M}\otimes\mathcal{W}_{d\in\mathcal{D}_{W_{2}}\mapsto
(0,0)\in\mathcal{D}_{W_{1}}\times\mathcal{D}_{W_{2}}}%
\end{array}
M\otimes W_{2}%
\]
in the category $\mathbf{FS}$. Therefore, we are now sure that the Weil
prolongation $\pi_{M}^{\left(  M\otimes W_{1}\right)  \otimes_{M}W_{2}%
}:\left(  M\otimes W_{1}\right)  \otimes_{M}W_{2}\rightarrow M$ of the
canonical projection $\pi_{M}^{M\otimes W_{1}}:M\otimes W_{1}\rightarrow M$
with respect to $W_{2}$ is no other than $\pi_{M}^{M\otimes\left(
W_{1}\widetilde{\otimes}_{\infty}W_{2}\right)  }:M\otimes\left(
W_{1}\widetilde{\otimes}_{\infty}W_{2}\right)  \rightarrow M$.
\end{proof}

\section{Relatived Weil Exponentiability}

As in Lemma \ref{t2.3.1}, we have

\begin{lemma}
\label{t4.1}For any smooth mappings $\pi_{M}^{E}:E\rightarrow M$\ and $\pi
_{M}^{F}:F\rightarrow M$\ and for any Weil algebra $W$, we have
\[
\left[  E,F\right]  _{M}\otimes_{M}W=\left[  E,F\otimes_{M}W\right]  _{M}%
\]

\end{lemma}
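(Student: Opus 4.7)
The plan is to mimic the proof of the absolute version (Lemma \ref{t2.3.1}), but to carry it out in the slice category $\mathbf{FS}/M$, using the cartesian closure granted by Theorem \ref{t2.1.3} and the explicit construction of $\otimes_{M}W$ from Section~3. Write $W=\mathcal{C}^{\infty}(\mathbb{R}^{n})/I$, and view $\mathbb{R}^{n}$ as an object of $\mathbf{FS}/M$ via the constant map $\mathrm{const}_{m}:\mathbb{R}^{n}\rightarrow M$ for each $m\in M$. By construction, a point of $[E,F]_{M}\otimes_{M}W$ lying above $m$ is an equivalence class $[g]$ of smooth maps $g:\mathbb{R}^{n}\rightarrow \lbrack E,F]_{M}$ with $\pi_{M}^{[E,F]_{M}}\circ g=\mathrm{const}_{m}$, and likewise a point of $[E,F\otimes_{M}W]_{M}$ over $m$ is a morphism into the fiber of $[E,F\otimes_{M}W]_{M}$ over $m$.

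First, I would apply the cartesian-closed adjunction in $\mathbf{FS}/M$ twice. On the left-hand side, each representative $g$, as a morphism $\mathrm{const}_{m}\rightarrow\lbrack E,F]_{M}$ in $\mathbf{FS}/M$, transposes to a unique morphism $\widetilde{g}:\mathrm{const}_{m}\times_{M}E\rightarrow F$ in $\mathbf{FS}/M$. On the right-hand side, each representative $h:\mathbb{R}^{n}\rightarrow \lbrack E,F\otimes_{M}W]_{M}$ with projection $\mathrm{const}_{m}$ transposes to $\widetilde{h}:\mathrm{const}_{m}\times_{M}E\rightarrow F\otimes_{M}W$ in $\mathbf{FS}/M$, and, by the set-theoretic definition of $\otimes_{M}W$ applied fibrewise over $E$, such an $\widetilde{h}$ is in turn an equivalence class of smooth maps $\mathrm{const}_{m}\times_{M}E\rightarrow F$ in $\mathbf{FS}/M$ modulo the $\equiv\mathrm{mod}\,I$ relation applied pointwise in the $E$-variable. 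Thus both sides are represented by equivalence classes of the same smooth maps $\mathrm{const}_{m}\times_{M}E\rightarrow F$ in $\mathbf{FS}/M$, and it remains only to see that the two equivalence relations coincide.

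Second, to identify the equivalence relations, I would appeal to the definition of the Fr\"{o}licher structure on $[E,F]_{M}$: it is the initial structure with respect to the evaluation maps into $F$ composed with structure functions $\chi\in\mathcal{C}^{\infty}(F,\mathbb{R})$. Consequently, a structure function on $[E,F]_{M}$ is determined by the family of composites of evaluations at points of $E$ with $\chi\in\mathcal{C}^{\infty}(F,\mathbb{R})$. Therefore $\chi\circ g-\chi\circ g^{\prime}\in I$ for every $\chi\in\mathcal{C}^{\infty}([E,F]_{M},\mathbb{R})$ is equivalent to $\chi\circ\mathrm{ev}_{e}\circ g-\chi\circ\mathrm{ev}_{e}\circ g^{\prime}\in I$ for every $e\in E_{m}$ and every $\chi\in\mathcal{C}^{\infty}(F,\mathbb{R})$, which is precisely the $\equiv\mathrm{mod}\,I$ relation on the transposes $\widetilde{g},\widetilde{g}^{\prime}$ applied pointwise in $E$. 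This shows the required bijection at the level of underlying sets, fibrewise above each $m\in M$.

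Finally, naturality in $E,F,W$ is automatic from the categorical constructions involved, and compatibility with the Fr\"{o}licher structures on both sides follows because each is the initial structure with respect to composites with $\chi\overrightarrow{\otimes}W$ into $\mathbb{R}\otimes W$; the above identification intertwines these initial structures. The main obstacle I expect is precisely the second step: verifying that the equivalence relation defining $[E,F]_{M}\otimes_{M}W$, formulated via structure functions on the exponential $[E,F]_{M}$, translates under the exponential adjunction to the pointwise-in-$E$ relation encoded in $[E,F\otimes_{M}W]_{M}$. Once the initiality of the Fr\"{o}licher structure on the exponential is brought to bear, this compatibility is clear, and the lemma follows.
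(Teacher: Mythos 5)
The paper offers no proof of this lemma at all---it is stated with the remark ``As in Lemma \ref{t2.3.1}, we have'', i.e., the reader is asked to relativize the absolute identity $\left[N,M\right]\otimes W=\left[N,M\otimes W\right]$ from \cite{nishi1}. Your overall strategy---work fibrewise over each $m\in M$, transpose along the exponential adjunction of $\mathbf{FS}/M$, and match the two equivalence relations---is exactly the relativization the author intends, so the architecture is right. However, two steps as written do not go through. First, your description of the right-hand side is off: a point of $\left[E,F\otimes_{M}W\right]_{M}$ over $m$ is an element of the fibre of the slice exponential, hence (transposing a global point) a morphism $E_{m}\rightarrow\left(F\otimes_{M}W\right)_{m}$ over $M$; it is \emph{not} an equivalence class of maps $h:\mathbb{R}^{n}\rightarrow\left[E,F\otimes_{M}W\right]_{M}$ with constant projection---that would describe $\left[E,F\otimes_{M}W\right]_{M}\otimes_{M}W$. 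More seriously, the assertion that such a morphism ``is in turn an equivalence class of smooth maps $\mathrm{const}_{m}\times_{M}E\rightarrow F$ modulo $\equiv\mathrm{mod}\,I$ applied pointwise in the $E$-variable'' is precisely the hard direction of the lemma in disguise: it claims that every smooth map into the quotient $F\otimes_{M}W$ lifts, uniformly in the $E$-variable, to a single smooth map into $F$, and that any two such lifts are congruent. Quotients in $\mathbf{FS}$ do not admit such liftings for free, so this step begs the question rather than proving it.

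Second, your identification of the equivalence relations only yields one inclusion. If $\chi\circ g-\chi\circ g^{\prime}\in I$ for every structure function $\chi$ of $\left[E,F\right]_{M}$, then in particular this holds for the functions $\chi\circ\mathrm{ev}_{e}$; that direction is free. The converse is not: the Fr\"{o}licher structure on the exponential is the \emph{saturation} of the generating family (indeed it is generated by composites along structure curves of $E$, not merely by point evaluations), so it contains many structure functions $\phi$ not of the form $\chi\circ\mathrm{ev}_{e}$, and membership of $\phi\circ g-\phi\circ g^{\prime}$ in the ideal $I$ does not follow formally from the corresponding statement for a generating family. One needs an argument---e.g., a Hadamard-type factorization $\theta(u)-\theta(v)=\sum_{i}(u_{i}-v_{i})h_{i}(u,v)$ combined with the fact that $I$ has finite codimension and is therefore controlled by finitely many jets at the origin, together with some control on how structure functions of the exponential depend on the generators. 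This is exactly the technical content buried in the absolute Lemma \ref{t2.3.1} as proved in \cite{nishi1}; your proof should either reproduce that argument in the slice setting or reduce the relative statement fibrewise to the absolute one. As it stands, the two crucial reductions are asserted rather than established.
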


Now we relativize the notion of Weil exponentiability.

\begin{definition}
A smooth mapping $\pi_{M}^{E}:E\rightarrow M$\ of Fr\"{o}licher spaces over
$M$\ is called \textit{Weil-exponentiable with respect to }$M$\textit{\ }or,
more briefly,\textit{\ }$M$-\textit{Weil-exponentiable }if
\[
E\otimes_{M}(W_{1}\otimes_{\infty}W_{2})=\left(  E\otimes_{M}W_{1}\right)
\otimes_{M}W_{2}%
\]
holds naturally for any Weil algebras $W_{1}$ and $W_{2}$.
\end{definition}

As in Proposition \ref{t2.3.2}, we have

\begin{proposition}
\label{t4.2}If a smooth mapping $\pi_{M}^{E}:E\rightarrow M$\ of Fr\"{o}licher
spaces is $M$-Weil-exponentiable, then so is $\pi_{M}^{\left[  F,E\right]
_{M}}:\left[  F,E\right]  _{M}\rightarrow M$\ for any smooth mapping $\pi
_{M}^{F}:F\rightarrow M$\ of Fr\"{o}licher spaces.
\end{proposition}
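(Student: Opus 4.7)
The plan is to mimic the proof of Proposition \ref{t2.3.2} verbatim, with the relativized internal hom $[\cdot,\cdot]_M$ of the slice category $\mathbf{FS}/M$ replacing the absolute internal hom, and with Lemma \ref{t4.1} playing the role Lemma \ref{t2.3.1} played there. The hypothesis of $M$-Weil exponentiability of $\pi_M^E$ is the only substantive input.

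Concretely, I would carry out the following chain of natural identifications for arbitrary Weil algebras $W_1$ and $W_2$. Start from $[F,E]_M \otimes_M (W_1 \otimes_\infty W_2)$. First apply Lemma \ref{t4.1} to pull the Weil prolongation inside the internal hom, obtaining $[F, E \otimes_M (W_1 \otimes_\infty W_2)]_M$. Next invoke the $M$-Weil exponentiability of $\pi_M^E$ on the second slot to rewrite the inside as $(E \otimes_M W_1) \otimes_M W_2$. Apply Lemma \ref{t4.1} a second time to extract the outermost $\otimes_M W_2$ from the internal hom, reaching $[F, E \otimes_M W_1]_M \otimes_M W_2$. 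Finally a third application of Lemma \ref{t4.1} pulls $\otimes_M W_1$ out as well, yielding $\bigl([F,E]_M \otimes_M W_1\bigr) \otimes_M W_2$, which is the required identity.

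There is no real obstacle in the calculation itself; the only thing to be careful about is naturality in $W_1$ and $W_2$. Each of the three applications of Lemma \ref{t4.1} is natural by the statement of that lemma, and the application of the $M$-Weil exponentiability of $\pi_M^E$ is natural by definition, so the composed isomorphism is natural. One should also note tacitly that Lemma \ref{t4.1} is being used with the second argument being, successively, $E\otimes_M(W_1\otimes_\infty W_2)$, $(E\otimes_M W_1)\otimes_M W_2$, and $E\otimes_M W_1$, all of which are legitimate objects of $\mathbf{FS}/M$, so the relativized internal hom is well-defined at each step. Once this is observed, the proof is just the four-line display analogous to the one given for Proposition \ref{t2.3.2}.
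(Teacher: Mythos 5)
Your proposal is correct and is precisely the argument the paper intends: the paper gives no explicit proof, saying only ``As in Proposition \ref{t2.3.2}, we have,'' and your chain of identifications is the verbatim relativization of that proof, with Lemma \ref{t4.1} replacing Lemma \ref{t2.3.1} and the $M$-Weil exponentiability of $\pi_{M}^{E}$ supplying the middle step. Nothing further is needed.
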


As in Proposition \ref{t2.3.3}, we have

\begin{proposition}
\label{t4.3}If a smooth mapping $\pi_{M}^{E}:E\rightarrow M$\ of Fr\"{o}licher
spaces is $M$-Weil-exponentiable, then so is its $M$-Weil prolongation
$\pi_{M}^{E\otimes_{M}W}:E\otimes_{M}W\rightarrow M$ with respect to any Weil
algebra $W$.
\end{proposition}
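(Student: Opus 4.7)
The plan is to mirror the argument used for Proposition \ref{t2.3.3} verbatim, except that $M$-Weil-exponentiability of $E$ replaces Weil-exponentiability of $M$, and $\otimes_{M}$ replaces $\otimes$ throughout. The key observation is that the four-step chain in the proof of Proposition \ref{t2.3.3} uses nothing more than (i) the defining identity of Weil-exponentiability applied twice and (ii) associativity of $\otimes_{\infty}$ on Weil algebras, and all three of these moves are available in the relative setting.

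Concretely, for arbitrary Weil algebras $W_{1}$ and $W_{2}$ I would compute
\begin{align*}
(E\otimes_{M}W)\otimes_{M}(W_{1}\otimes_{\infty}W_{2})
&= E\otimes_{M}\bigl(W\otimes_{\infty}(W_{1}\otimes_{\infty}W_{2})\bigr)\\
&= E\otimes_{M}\bigl((W\otimes_{\infty}W_{1})\otimes_{\infty}W_{2}\bigr)\\
&= \bigl(E\otimes_{M}(W\otimes_{\infty}W_{1})\bigr)\otimes_{M}W_{2}\\
&= \bigl((E\otimes_{M}W)\otimes_{M}W_{1}\bigr)\otimes_{M}W_{2},
\end{align*}
where the first, third, and fourth equalities are applications of the $M$-Weil-exponentiability of $E$ (the first reading it right-to-left on the pair $(W,W_{1}\otimes_{\infty}W_{2})$, the third and fourth reading it left-to-right on the pairs $(W\otimes_{\infty}W_{1},W_{2})$ and $(W,W_{1})$ respectively), and the second equality is the associativity of $\otimes_{\infty}$ in $\mathbf{W}$.

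There is no genuine obstacle here: the only thing one must check is that the three invocations of $M$-Weil-exponentiability are legitimate, which amounts to reading the hypothesis
\[
E\otimes_{M}(V_{1}\otimes_{\infty}V_{2})=(E\otimes_{M}V_{1})\otimes_{M}V_{2}
\]
with the pairs $(V_{1},V_{2})=(W,W_{1}\otimes_{\infty}W_{2})$, $(W\otimes_{\infty}W_{1},W_{2})$, and $(W,W_{1})$. Naturality in $(W_{1},W_{2})$ comes for free from the functoriality of $\otimes_{M}$ established at the end of Section 3, so the composite natural isomorphism verifies the definition of $M$-Weil-exponentiability for $\pi_{M}^{E\otimes_{M}W}$.
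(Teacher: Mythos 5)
Your proposal is correct and is exactly the argument the paper intends: the paper gives no separate proof for Proposition \ref{t4.3} beyond the remark ``As in Proposition \ref{t2.3.3}, we have\dots'', and your four-step chain is precisely the proof of Proposition \ref{t2.3.3} with $\otimes$ replaced by $\otimes_{M}$ and the hypothesis on $M$ replaced by $M$-Weil-exponentiability of $E$. The three instantiations of the defining identity and the use of associativity of $\otimes_{\infty}$ are all identified correctly, so nothing is missing.
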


As in Proposition \ref{t2.3.4}, we have

\begin{proposition}
\label{t4.4}If $\pi_{M}^{E}:E\rightarrow M$ and $\pi_{M}^{F}:F\rightarrow M $
are $M$-Weil-exponentiable smooth mappings of Fr\"{o}licher spaces, then so is
$\pi_{M}^{E\times_{M}F}:E\times_{M}F\rightarrow M$.
\end{proposition}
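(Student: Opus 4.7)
The plan is to run through the same four-step computation as in Proposition \ref{t2.3.4}, but now inside the slice category $\mathbf{FS}/M$ rather than $\mathbf{FS}$, with the role of product-preservation of $\cdot\otimes W:\mathbf{FS}\to\mathbf{FS}$ played by its relativized counterpart, Theorem \ref{t3.1}, which asserts that $\cdot\otimes_{M}W:\mathbf{FS}/M\to\mathbf{FS}/M$ is product-preserving for every Weil algebra $W$.

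Concretely, fix arbitrary Weil algebras $W_{1}$ and $W_{2}$. I would first apply Theorem \ref{t3.1} to the Weil algebra $W_{1}\otimes_{\infty}W_{2}$ in order to split the fibered product:
\[
(E\times_{M}F)\otimes_{M}(W_{1}\otimes_{\infty}W_{2}) = \bigl(E\otimes_{M}(W_{1}\otimes_{\infty}W_{2})\bigr)\times_{M}\bigl(F\otimes_{M}(W_{1}\otimes_{\infty}W_{2})\bigr).
\]
Then I would invoke the hypothesis that $\pi_{M}^{E}$ and $\pi_{M}^{F}$ are $M$-Weil-exponentiable, which rewrites each factor and gives
\[
\bigl((E\otimes_{M}W_{1})\otimes_{M}W_{2}\bigr)\times_{M}\bigl((F\otimes_{M}W_{1})\otimes_{M}W_{2}\bigr).
\]
Next, a second appeal to Theorem \ref{t3.1}, this time applied to $W_{2}$, pulls $\otimes_{M}W_{2}$ outside the fibered product:
\[
\bigl((E\otimes_{M}W_{1})\times_{M}(F\otimes_{M}W_{1})\bigr)\otimes_{M}W_{2}.
\]
Finally, one more application of Theorem \ref{t3.1}, now to $W_{1}$, recombines the inner fibered product into $(E\times_{M}F)\otimes_{M}W_{1}$, yielding
\[
\bigl((E\times_{M}F)\otimes_{M}W_{1}\bigr)\otimes_{M}W_{2},
\]
as required. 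Naturality in $W_{1}$ and $W_{2}$ comes for free, since every equality used is a natural isomorphism.

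There is no real obstacle to this argument: it is a mechanical relativization of Proposition \ref{t2.3.4}. The only thing to be careful about is that at no stage do we need $M$ itself to be microlinear or Weil-exponentiable; the argument goes through purely by the product-preservation statement of Theorem \ref{t3.1} together with the defining property of $M$-Weil-exponentiability applied to $E$ and to $F$ separately.
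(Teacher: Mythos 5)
Your proof is correct and coincides with the paper's own argument: the paper establishes Proposition \ref{t4.4} merely by the remark ``as in Proposition \ref{t2.3.4}'', and your computation is exactly that proof transported to the slice category $\mathbf{FS}/M$, with Theorem \ref{t3.1} supplying the product-preservation of $\cdot\otimes_{M}W$ in place of that of $\cdot\otimes W$. Your closing observation that no microlinearity or Weil-exponentiability of $M$ itself is needed is also accurate.
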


\begin{theorem}
\label{t4.5}The full subcategory $(\mathbf{FS}/M)_{\mathrm{WE}}$\ of all
$M$-Weil-exponentiable smooth mappings of Fr\"{o}licher spaces of the category
$\mathbf{FS}/M$ is Cartesian closed.
\end{theorem}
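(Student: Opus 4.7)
The plan is to mirror the proof of Theorem \ref{t2.3.6} verbatim, relativized over $M$. By Theorem \ref{t2.1.3}, the ambient slice category $\mathbf{FS}/M$ is already cartesian closed, with binary products given by $\cdot\times_{M}\cdot$ and exponentials given by $[\cdot,\cdot]_{M}$. To establish that the full subcategory $(\mathbf{FS}/M)_{\mathrm{WE}}$ is itself cartesian closed, it suffices to verify that these cartesian-closed operations restrict to the subcategory — i.e., that (i)~the terminal object lies in $(\mathbf{FS}/M)_{\mathrm{WE}}$, (ii)~$(\mathbf{FS}/M)_{\mathrm{WE}}$ is closed under $\cdot\times_{M}\cdot$, and (iii)~$(\mathbf{FS}/M)_{\mathrm{WE}}$ is closed under $[\cdot,\cdot]_{M}$ in the second argument.

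For (i), the terminal object of $\mathbf{FS}/M$ is $\mathrm{id}_{M}:M\rightarrow M$. Unwinding the set-theoretic construction of Section 3, the condition that $\mathrm{id}_{M}\circ f$ be constant forces the smooth map $f:\mathbb{R}^{n}\rightarrow M$ itself to be constant, so each equivalence class $\equiv\operatorname{mod}\,I$ is a singleton and $M\otimes_{M}W=M$ for any Weil algebra $W$. Consequently the identity $M\otimes_{M}(W_{1}\otimes_{\infty}W_{2})=(M\otimes_{M}W_{1})\otimes_{M}W_{2}$ reduces to $M=M$ and holds trivially and naturally, so $\mathrm{id}_{M}$ is $M$-Weil exponentiable. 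For (ii) and (iii), closure under $\cdot\times_{M}\cdot$ is exactly the content of Proposition \ref{t4.4}, while closure of $[F,E]_{M}$ in its second argument is the content of Proposition \ref{t4.2}.

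Since $\mathbf{FS}/M$ is cartesian closed and its cartesian-closed structure restricts to $(\mathbf{FS}/M)_{\mathrm{WE}}$ by (i)--(iii), the full subcategory inherits the cartesian-closed structure and the theorem follows. I do not expect any substantive obstacle: the real work has already been done in Propositions \ref{t4.2} and \ref{t4.4} (and indirectly in Lemma \ref{t4.1}, the relativized analogue of Lemma \ref{t2.3.1}). The only point requiring a moment's thought is the verification (i) for the terminal object, which is handled by the observation that $M\otimes_{M}W=M$; this is the relativized counterpart of the fact that $1\otimes W=1$ used implicitly in the proof of Theorem \ref{t2.3.6}.
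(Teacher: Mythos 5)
Your proof is correct and follows essentially the same route the paper takes: the paper leaves Theorem \ref{t4.5} without an explicit proof, but its proofs of the analogous Theorems \ref{t2.3.6} and \ref{t5.5} consist precisely of citing the cartesian closedness of the ambient category together with the relevant closure propositions, which here are Theorem \ref{t2.1.3} and Propositions \ref{t4.2} and \ref{t4.4}. Your explicit verification that the terminal object $\mathrm{id}_{M}$ is $M$-Weil exponentiable, via the observation that $M\otimes_{M}W=M$, is a small but welcome addition that the paper leaves tacit.
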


We conclude this section by noting that the canonical projection $\pi
_{M}^{M\otimes W}:M\otimes W\rightarrow M$ is $M$-Weil exponentiable for any
Weil algebra $W$, where $M$ is a Weil exponentiable and microlinear
Fr\"{o}licher space. To this end, we need

\begin{proposition}
\label{t4.6}Given Weil algebras $W_{1}$, $W_{2}$\ and $W_{3}$, we have
\[
\left(  W_{1}\widetilde{\otimes}_{\infty}W_{2}\right)  \widetilde{\otimes
}_{\infty}W_{3}=W_{1}\widetilde{\otimes}_{\infty}\left(  W_{2}\otimes_{\infty
}W_{3}\right)
\]

\end{proposition}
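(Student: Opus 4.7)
The plan is to show that both sides represent the same subobject of $W_1 \otimes_\infty W_2 \otimes_\infty W_3$ in $\mathbf{W}$. The guiding intuition is most transparent in the dual picture of infinitesimal objects: $W_1 \widetilde{\otimes}_\infty W_2$ corresponds to the quotient of $\mathcal{D}_{W_1} \times \mathcal{D}_{W_2}$ obtained by crushing the slice $\{0\} \times \mathcal{D}_{W_2}$ to the basepoint $(0,0)$. Iterating on the left-hand side collapses the $\{0\}$-slice first in the $\mathcal{D}_{W_2}$-coordinate, then, at the resulting basepoint, in the $\mathcal{D}_{W_3}$-coordinate; combined, this crushes the entire slice $\{0\} \times \mathcal{D}_{W_2} \times \mathcal{D}_{W_3}$ to a single point. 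The right-hand side crushes precisely the same slice in one step, because the basepoint of $\mathcal{D}_{W_2 \otimes_\infty W_3} = \mathcal{D}_{W_2} \times \mathcal{D}_{W_3}$ is $(0,0)$.

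To render this rigorous inside $\mathbf{W}$, I would unpack both sides as equalizers of parallel pairs emanating from $W_1 \otimes_\infty W_2 \otimes_\infty W_3$. By the Notation preceding Lemma \ref{t3.2}, the right-hand side is directly the equalizer of
\[
\mathrm{id}_{W_1} \otimes_\infty \mathcal{W}_{e \mapsto (0,e)},\quad \mathrm{id}_{W_1} \otimes_\infty \mathcal{W}_{e \mapsto (0,0)} : W_1 \otimes_\infty W_2 \otimes_\infty W_3 \rightrightarrows W_2 \otimes_\infty W_3,
\]
where $e$ ranges over $\mathcal{D}_{W_2 \otimes_\infty W_3}$. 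For the left-hand side, I first observe that tensoring with $W_3$ is exact (the tensor product over $\mathbb{R}$ with a finite-dimensional algebra preserves equalizers of linear, hence of algebra, maps), so it preserves the equalizer defining $W_1 \widetilde{\otimes}_\infty W_2$. This places $(W_1 \widetilde{\otimes}_\infty W_2) \otimes_\infty W_3$ inside $W_1 \otimes_\infty W_2 \otimes_\infty W_3$ as the equalizer of the inner pair tensored with $\mathrm{id}_{W_3}$, after which the outer $\widetilde{\otimes}_\infty W_3$ imposes a second equalizer condition coming from the basepoint of $\mathcal{D}_{W_1 \widetilde{\otimes}_\infty W_2}$.

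The main technical step is then to verify that the two equalizer conditions imposed on the left coincide with the single condition on the right. Once this identification is made, both sides share the universal property of the displayed equalizer, so the canonical morphisms between them induced by their respective universal properties are mutually inverse. The expected obstacle is the bookkeeping of basepoints through the iterated construction on the left: the basepoint of $\mathcal{D}_{W_1 \widetilde{\otimes}_\infty W_2}$ is the image of $(0,0) \in \mathcal{D}_{W_1} \times \mathcal{D}_{W_2}$ under the inner quotient, so the outer collapse $\{\ast\} \times \mathcal{D}_{W_3}$ lifts back to $\{0\} \times \{0\} \times \mathcal{D}_{W_3}$ in $\mathcal{D}_{W_1} \times \mathcal{D}_{W_2} \times \mathcal{D}_{W_3}$, and together with the inner collapse of $\{0\} \times \mathcal{D}_{W_2}$ in the $\mathcal{D}_{W_2}$-direction this yields exactly the single-step collapse appearing on the right.
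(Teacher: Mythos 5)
Your proposal is correct and takes essentially the same route as the paper's own proof: both sides are realized as equalizers inside $W_{1}\otimes_{\infty}W_{2}\otimes_{\infty}W_{3}$ using the left-exactness of $\cdot\otimes_{\infty}W_{3}$, the outer basepoint condition is pulled back through the canonical injection $W_{3}\rightarrow W_{2}\otimes_{\infty}W_{3}$, and the two conditions on the left are then seen to amount to the single condition defining the right-hand side. The one step you leave as ``to verify'' --- that the joint equalizer of the three induced maps coincides with the equalizer of the outermost pair --- is asserted with the same brevity in the paper itself.
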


\begin{proof}
Since the functor $\cdot\otimes_{\infty}W:\mathbf{W}\rightarrow\mathbf{W} $
preserves limits, particularly equalizers, $\left(  W_{1}\widetilde{\otimes
}_{\infty}W_{2}\right)  \otimes_{\infty}W_{3}$ is no other than the equalizer
of
\[
W_{1}\otimes_{\infty}W_{2}\otimes_{\infty}W_{3}
\begin{array}
[c]{c}%
\mathcal{W}_{\left(  d_{2},d_{3}\right)  \in\mathcal{D}_{W_{2}}\times
\mathcal{D}_{W_{3}}\mapsto(0,d_{2},d_{3})\in\mathcal{D}_{W_{1}}\times
\mathcal{D}_{W_{2}}\times\mathcal{D}_{W_{3}}}\\
\rightrightarrows\\
\mathcal{W}_{\left(  d_{2},d_{3}\right)  \in\mathcal{D}_{W_{2}}\times
\mathcal{D}_{W_{3}}\mapsto(0,0,d_{3})\in\mathcal{D}_{W_{1}}\times
\mathcal{D}_{W_{2}}\times\mathcal{D}_{W_{3}}}%
\end{array}
W_{2}\otimes_{\infty}W_{3}%
\]
Since the canonical injection $W_{3}\rightarrow W_{2}\otimes_{\infty}W_{3}$ is
represented by
\[
W_{3}
\begin{array}
[c]{c}%
\mathcal{W}_{\left(  d_{2},d_{3}\right)  \in\mathcal{D}_{W_{2}}\times
\mathcal{D}_{W_{3}}\mapsto d_{3}\in\mathcal{D}_{W_{3}}}\\
\rightarrow\\
\,
\end{array}
W_{2}\otimes_{\infty}W_{3}%
\]
the equalizer of
\[
\left(  W_{1}\widetilde{\otimes}_{\infty}W_{2}\right)  \otimes_{\infty}W_{3}
\begin{array}
[c]{c}%
\mathcal{W}_{d_{3}\in\mathcal{D}_{W_{3}}\mapsto(0,d_{3})\in\mathcal{D}%
_{W_{1}\widetilde{\otimes}_{\infty}W_{2}}\times\mathcal{D}_{W_{3}}}\\
\rightrightarrows\\
\mathcal{W}_{d_{3}\in\mathcal{D}_{W_{3}}\mapsto(0,0)\in\mathcal{D}%
_{W_{1}\widetilde{\otimes}_{\infty}W_{2}}\times\mathcal{D}_{W_{3}}}%
\end{array}
W_{3}%
\]
is no other than the intersection of $\left(  W_{1}\widetilde{\otimes}%
_{\infty}W_{2}\right)  \otimes_{\infty}W_{3}$ with the equalizer of
\[
W_{1}\otimes_{\infty}W_{2}\otimes_{\infty}W_{3}
\begin{array}
[c]{c}%
\mathcal{W}_{\left(  d_{2},d_{3}\right)  \in\mathcal{D}_{W_{2}}\times
\mathcal{D}_{W_{3}}\mapsto(0,0,d_{3})\in\mathcal{D}_{W_{1}}\times
\mathcal{D}_{W_{2}}\times\mathcal{D}_{W_{3}}}\\
\rightrightarrows\\
\mathcal{W}_{\left(  d_{2},d_{3}\right)  \in\mathcal{D}_{W_{2}}\times
\mathcal{D}_{W_{3}}\mapsto(0,0,0)\in\mathcal{D}_{W_{1}}\times\mathcal{D}%
_{W_{2}}\times\mathcal{D}_{W_{3}}}%
\end{array}
W_{2}\otimes_{\infty}W_{3}%
\]
Therefore, we are sure that $\left(  W_{1}\widetilde{\otimes}_{\infty}%
W_{2}\right)  \widetilde{\otimes}_{\infty}W_{3}$ is the equalizer of
\[
W_{1}\otimes_{\infty}W_{2}\otimes_{\infty}W_{3}
\begin{array}
[c]{c}%
\mathcal{W}_{\left(  d_{2},d_{3}\right)  \in\mathcal{D}_{W_{2}}\times
\mathcal{D}_{W_{3}}\mapsto(0,d_{2},d_{3})\in\mathcal{D}_{W_{1}}\times
\mathcal{D}_{W_{2}}\times\mathcal{D}_{W_{3}}}\\
\rightarrow\\
\mathcal{W}_{\left(  d_{2},d_{3}\right)  \in\mathcal{D}_{W_{2}}\times
\mathcal{D}_{W_{3}}\mapsto(0,0,d_{3})\in\mathcal{D}_{W_{1}}\times
\mathcal{D}_{W_{2}}\times\mathcal{D}_{W_{3}}}\\
\rightarrow\\
\mathcal{W}_{\left(  d_{2},d_{3}\right)  \in\mathcal{D}_{W_{2}}\times
\mathcal{D}_{W_{3}}\mapsto(0,0,0)\in\mathcal{D}_{W_{1}}\times\mathcal{D}%
_{W_{2}}\times\mathcal{D}_{W_{3}}}\\
\rightarrow\\
\,
\end{array}
W_{2}\otimes_{\infty}W_{3}%
\]
which is no other than the equalizer of
\[
W_{1}\otimes_{\infty}W_{2}\otimes_{\infty}W_{3}
\begin{array}
[c]{c}%
\mathcal{W}_{\left(  d_{2},d_{3}\right)  \in\mathcal{D}_{W_{2}}\times
\mathcal{D}_{W_{3}}\mapsto(0,d_{2},d_{3})\in\mathcal{D}_{W_{1}}\times
\mathcal{D}_{W_{2}}\times\mathcal{D}_{W_{3}}}\\
\rightrightarrows\\
\mathcal{W}_{\left(  d_{2},d_{3}\right)  \in\mathcal{D}_{W_{2}}\times
\mathcal{D}_{W_{3}}\mapsto(0,0,0)\in\mathcal{D}_{W_{1}}\times\mathcal{D}%
_{W_{2}}\times\mathcal{D}_{W_{3}}}%
\end{array}
W_{2}\otimes_{\infty}W_{3}%
\]
This implies the desired result, completing the proof.
\end{proof}

\begin{theorem}
\label{t4.7}For any Weil algebra $W$, the canonical projection $\pi
_{M}^{M\otimes W}:M\otimes W\rightarrow M$ is $M$-Weil exponentiable.
\end{theorem}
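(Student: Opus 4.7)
The plan is to reduce the identity defining $M$-Weil exponentiability for $\pi_M^{M\otimes W}$ to the associativity-type identity in Proposition~\ref{t4.6}. Concretely, I need to show that for any Weil algebras $V_1, V_2$ the canonical projections
\[
(M\otimes W)\otimes_M (V_1\otimes_\infty V_2) \longrightarrow M
\]
and
\[
\bigl((M\otimes W)\otimes_M V_1\bigr)\otimes_M V_2 \longrightarrow M
\]
agree naturally, and this will be done by invoking Theorem~\ref{t3.4} three times and then applying Proposition~\ref{t4.6}.

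First I would apply Theorem~\ref{t3.4} to the first expression, with the two Weil algebras there taken to be $W$ and $V_1\otimes_\infty V_2$, rewriting it as $M\otimes\bigl(W\widetilde{\otimes}_\infty(V_1\otimes_\infty V_2)\bigr)\to M$. Then I would apply Theorem~\ref{t3.4} twice to the second expression: first to $(M\otimes W)\otimes_M V_1$, giving $M\otimes(W\widetilde{\otimes}_\infty V_1)$ over $M$, and then to this object paired with $V_2$, yielding $M\otimes\bigl((W\widetilde{\otimes}_\infty V_1)\widetilde{\otimes}_\infty V_2\bigr)\to M$. At this point Proposition~\ref{t4.6}, specialized to the triple $(W, V_1, V_2)$, supplies precisely the identification $W\widetilde{\otimes}_\infty(V_1\otimes_\infty V_2) = (W\widetilde{\otimes}_\infty V_1)\widetilde{\otimes}_\infty V_2$ in $\mathbf{W}$, and the functoriality of $M\otimes\cdot$ in the Weil algebra converts this into the sought equality in $\mathbf{FS}/M$.

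The main potential obstacle is the word \emph{naturally}: one must check that the identifications supplied by Theorem~\ref{t3.4} are natural in both Weil-algebra arguments and compatible with the projections down to $M$, so that stringing together three invocations of Theorem~\ref{t3.4} with Proposition~\ref{t4.6} really produces an equality of objects in $\mathbf{FS}/M$ rather than merely of underlying Fr\"{o}licher spaces. Since Theorem~\ref{t3.4} itself is obtained from microlinearity applied to equalizer diagrams in $\mathbf{W}$ that are already natural in the input Weil algebras, this naturality should reduce to straightforward bookkeeping rather than a substantive new difficulty.
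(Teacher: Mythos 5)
Your proposal is correct and is essentially the paper's own proof: both reduce the claim to three applications of Theorem \ref{t3.4} together with the associativity identity of Proposition \ref{t4.6}, differing only in that you work on the two sides separately to meet in the middle while the paper writes a single chain of equalities.
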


\begin{proof}
We have
\begin{align*}
&  \left(  \left(  M\otimes W\right)  \otimes_{M}W_{1}\right)  \otimes
_{M}W_{2}\\
&  =\left(  M\otimes\left(  W\widetilde{\otimes}_{\infty}W_{1}\right)
\right)  \otimes_{M}W_{2}\\
&  \text{[by Theorem \ref{t3.4}]}\\
&  =M\otimes\left(  \left(  W\widetilde{\otimes}_{\infty}W_{1}\right)
\widetilde{\otimes}_{\infty}W_{2}\right) \\
&  \text{[by Theorem \ref{t3.4}]}\\
&  =M\otimes\left(  W\widetilde{\otimes}_{\infty}\left(  W_{1}\otimes_{\infty
}W_{2}\right)  \right) \\
&  \text{[by Proposition \ref{t4.6}]}\\
&  =\left(  M\otimes W\right)  \otimes_{M}\left(  W_{1}\otimes_{\infty}%
W_{2}\right) \\
&  \text{[by Theorem \ref{t3.4}]}%
\end{align*}

\end{proof}

\section{Relativized Microlinearity}

\begin{definition}
A smooth mapping $\pi_{M}^{E}:E\rightarrow M$\ of Fr\"{o}licher spaces is
called microlinear with respect to $M$\ or, more briefly,\ $M$%
-microlinear\ providing that any finite limit diagram $\mathbb{D}$ in $W$
yields a limit diagram $E\otimes_{M}\mathbb{D}$ in the category $\mathbf{FS}$,
where $E\otimes_{M}\mathbb{D}$ is obtained from $\mathbb{D}$ by putting
$E\otimes_{M}$ to the left of every object in $\mathbb{D}$ and putting
$\mathrm{id}_{E}\otimes$ to the left of every morphism in $\mathbb{D}$.
\end{definition}

As in Proposition 14 of \cite{nishi2}, we have

\begin{proposition}
\label{t5.2}If $\pi_{M}^{E}:E\rightarrow M$ is a $M$-Weil exponentiable and
$M$-microlinear smooth mapping of Fr\"{o}licher spaces, then so is $\pi
_{M}^{\left[  F,E\right]  _{M}}:\left[  F,E\right]  _{M}\rightarrow M$ for any
smooth mapping $\pi_{M}^{F}:F\rightarrow M$\ of Fr\"{o}licher spaces.
\end{proposition}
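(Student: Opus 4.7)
The statement breaks into two independent conditions on $[F,E]_M$: $M$-Weil exponentiability and $M$-microlinearity. The first is already handled by the paper: since $E$ is assumed $M$-Weil exponentiable, Proposition \ref{t4.2} immediately delivers the $M$-Weil exponentiability of $[F,E]_M$. So the only real work lies in verifying $M$-microlinearity.

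For that part, my plan is to imitate Proposition 14 of \cite{nishi2}, combining two ingredients: Lemma \ref{t4.1} and the cartesian closedness of $\mathbf{FS}/M$ (Theorem \ref{t2.1.3}). Given an arbitrary finite limit diagram $\mathbb{D}$ in $\mathbf{W}$, I would first apply Lemma \ref{t4.1} objectwise and morphismwise to identify
\[
[F,E]_M \otimes_M \mathbb{D} \;=\; [F,\, E\otimes_M \mathbb{D}]_M,
\]
where $[F,\cdot]_M$ is applied termwise to the diagram $E\otimes_M \mathbb{D}$. By the assumed $M$-microlinearity of $E$, the diagram $E\otimes_M \mathbb{D}$ is already a limit diagram. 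Because $[F,\cdot]_M$ is the right adjoint of $\cdot\times_M F$ in the cartesian closed category $\mathbf{FS}/M$, it preserves all limits existing there, so the transformed diagram $[F, E\otimes_M \mathbb{D}]_M$ is again a limit diagram, which is precisely what is needed.

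The delicate point, which I expect to be the main obstacle, is the reconciliation of ``limit diagram in $\mathbf{FS}$'' (as written in the definition of $M$-microlinearity) with ``limit diagram in $\mathbf{FS}/M$'' (which is what the right-adjoint argument naturally produces). Each $E\otimes_M W$ comes equipped with its canonical projection $\pi_M^{E\otimes_M W}:E\otimes_M W\to M$, and the transition morphisms $\mathrm{id}_E\otimes \varphi$ are maps over $M$, so the diagram $E\otimes_M \mathbb{D}$ sits naturally inside $\mathbf{FS}/M$ and its limit in $\mathbf{FS}/M$ agrees with the limit in $\mathbf{FS}$ equipped with the induced structure map to $M$. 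Unwinding the definitions to make this compatibility precise -- and then verifying that the conclusion of the adjoint preservation, read in $\mathbf{FS}/M$, indeed yields the corresponding limit statement in $\mathbf{FS}$ demanded by Definition of $M$-microlinearity -- is the only subtle bookkeeping; after that, the chain of equalities above closes the argument.
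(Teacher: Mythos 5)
Your argument is correct and is essentially the intended one: the paper gives no proof of this proposition at all, merely remarking ``As in Proposition 14 of \cite{nishi2},'' and the argument of that cited proposition is precisely your combination of Proposition \ref{t4.2} for the Weil-exponentiability half with Lemma \ref{t4.1} and the limit-preservation of the right adjoint $\left[ F,\cdot\right] _{M}$ for the microlinearity half. The $\mathbf{FS}$-versus-$\mathbf{FS}/M$ bookkeeping you flag is harmless here, since the finite limit diagrams in $\mathbf{W}$ relevant to microlinearity (equalizers, pullbacks, and the like) are connected, so that limits computed in the slice and in $\mathbf{FS}$ agree.
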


As in Proposition 12 of \cite{nishi2}, we have

\begin{proposition}
\label{t5.3}If $\pi_{M}^{E}:E\rightarrow M$ is a $M$-Weil exponentiable and
$M$-microlinear smooth mapping of Fr\"{o}licher spaces, then so is $\pi
_{M}^{E\otimes_{M}W}:E\otimes_{M}W\rightarrow M$ for any Weil algebra $W$.
\end{proposition}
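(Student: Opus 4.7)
The plan is to verify separately the two defining conditions on $\pi_M^{E\otimes_M W}$: that it is $M$-Weil exponentiable and that it is $M$-microlinear. Each of these is to be deduced from the corresponding hypothesis on $\pi_M^E$, and I expect the two halves of the proof to be essentially independent.

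The $M$-Weil exponentiability of $\pi_M^{E\otimes_M W}$ needs no new work: it is exactly what Proposition \ref{t4.3} already delivers, since that proposition says that $M$-Weil exponentiability is inherited by any $M$-Weil prolongation of $\pi_M^E$.

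For the $M$-microlinearity, I would start from an arbitrary finite limit diagram $\mathbb{D}$ in $\mathbf{W}$ and try to rewrite $(E\otimes_M W)\otimes_M\mathbb{D}$ in a form to which the $M$-microlinearity of $\pi_M^E$ directly applies. The $M$-Weil exponentiability of $\pi_M^E$, applied at $W_1=W$ and $W_2=W'$, gives
\[
(E\otimes_M W)\otimes_M W' = E\otimes_M(W\otimes_\infty W'),
\]
naturally in $W'$. By naturality this promotes to an identification of diagrams
\[
(E\otimes_M W)\otimes_M\mathbb{D} = E\otimes_M(W\otimes_\infty\mathbb{D}),
\]
where $W\otimes_\infty\mathbb{D}$ is obtained by applying the functor $W\otimes_\infty\cdot:\mathbf{W}\rightarrow\mathbf{W}$ to $\mathbb{D}$. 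Since that functor preserves finite limits (the very fact invoked in the proof of Proposition \ref{t4.6}), $W\otimes_\infty\mathbb{D}$ is again a finite limit diagram in $\mathbf{W}$. The $M$-microlinearity of $\pi_M^E$, applied to this new diagram, then shows that $E\otimes_M(W\otimes_\infty\mathbb{D})$ — and hence $(E\otimes_M W)\otimes_M\mathbb{D}$ — is a limit diagram in $\mathbf{FS}$, which is precisely the required $M$-microlinearity of $\pi_M^{E\otimes_M W}$.

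The only slightly delicate point is the naturality step: one must know that the bijection $(E\otimes_M W)\otimes_M W'\cong E\otimes_M(W\otimes_\infty W')$ intertwines, for every morphism $\varphi:W'_1\rightarrow W'_2$ in $\mathbb{D}$, the arrows $\mathrm{id}_{E\otimes_M W}\otimes\varphi$ and $\mathrm{id}_E\otimes(\mathrm{id}_W\otimes_\infty\varphi)$. This is exactly the content of the word ``naturally'' in the definition of $M$-Weil exponentiability, so it requires no separate verification beyond reading off the relevant naturality square; it is this unpacking, rather than any substantive calculation, that I expect to be the only care-requiring step of the argument.
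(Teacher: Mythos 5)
Your proof is correct and is essentially the argument the paper intends: the paper offers no proof of this proposition, deferring instead to its unrelativized analogue (Proposition 12 of \cite{nishi2}). Your two-step argument---Proposition \ref{t4.3} for the exponentiability half, and for the microlinearity half the natural identification $\left(  E\otimes_{M}W\right)  \otimes_{M}\mathbb{D}=E\otimes_{M}\left(  W\otimes_{\infty}\mathbb{D}\right)  $ combined with the left-exactness of tensoring with $W$ in $\mathbf{W}$---is precisely the relativization of that omitted proof.
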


As in Proposition 13 of \cite{nishi2}, we have

\begin{proposition}
\label{t5.4}If $\pi_{M}^{E}:E\rightarrow M$ and $\pi_{M}^{F}:F\rightarrow M $
are $M$-microlinear smooth mappings of Fr\"{o}licher spaces, then so is
$\pi_{M}^{E\times_{M}F}:E\times_{M}F\rightarrow M$.
\end{proposition}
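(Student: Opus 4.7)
The plan is to reduce the claim to the fact, already encoded in Theorem~\ref{t3.1}, that each functor $\cdot\otimes_{M}W:\mathbf{FS}/M\to\mathbf{FS}/M$ preserves binary products over $M$. Given any finite limit diagram $\mathbb{D}$ in $\mathbf{W}$ with limit apex $L$, I want to identify $(E\times_{M}F)\otimes_{M}\mathbb{D}$ with the pointwise fiber product (in $\mathbf{FS}/M$) of the two diagrams $E\otimes_{M}\mathbb{D}$ and $F\otimes_{M}\mathbb{D}$, so that verifying it is a limit diagram reduces to checking that a pointwise product of two limit diagrams is itself a limit diagram.

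The key steps, carried out in this order, are: first, invoke $M$-microlinearity of $\pi_{M}^{E}$ and $\pi_{M}^{F}$ to conclude that $E\otimes_{M}\mathbb{D}$ and $F\otimes_{M}\mathbb{D}$ are limit diagrams with apices $E\otimes_{M}L$ and $F\otimes_{M}L$ respectively; second, apply Theorem~\ref{t3.1} objectwise and use its naturality in morphisms of $\mathbf{W}$ to identify $(E\times_{M}F)\otimes_{M}\mathbb{D}$ with the pointwise product $(E\otimes_{M}\mathbb{D})\times_{M}(F\otimes_{M}\mathbb{D})$ as diagrams of the same shape in $\mathbf{FS}/M$; third, invoke the elementary categorical fact that in any category with the relevant limits, the pointwise product of two limit cones is a limit cone whose apex is the product of the two apices. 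Stringing these together shows that the apex of $(E\times_{M}F)\otimes_{M}\mathbb{D}$ is $(E\otimes_{M}L)\times_{M}(F\otimes_{M}L)=(E\times_{M}F)\otimes_{M}L$, which is precisely what $M$-microlinearity of $\pi_{M}^{E\times_{M}F}$ demands.

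The main obstacle I anticipate is the bookkeeping in the second step: one must check that the product-preserving isomorphism supplied by Theorem~\ref{t3.1} is sufficiently natural not just at each vertex $W$ of $\mathbb{D}$ but uniformly along every arrow of $\mathbb{D}$, so that the identification genuinely takes place at the level of whole diagrams in $\mathbf{FS}/M$ rather than merely pointwise. Once that naturality is secured, the third step is completely formal --- an instance of the principle that limits commute with limits --- and no further Weil-algebraic or Fr\"{o}licher-space computation is needed.
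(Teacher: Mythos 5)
Your argument is correct and is exactly the intended one: the paper omits the proof of Proposition~\ref{t5.4}, deferring to its unrelativized analogue (Proposition 13 of \cite{nishi2}), and that argument is precisely the combination of the product-preservation of $\cdot\otimes_{M}W$ (Theorem~\ref{t3.1}, applied naturally along the arrows of $\mathbb{D}$) with the commutation of limits that you describe. Your explicit flagging of the naturality of the comparison isomorphism as the only point requiring care is apt; it follows from the bifunctoriality of $\otimes_{M}$, since the canonical comparison map into the fiber product is natural in $W$ and Theorem~\ref{t3.1} asserts it is invertible.
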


\begin{theorem}
\label{t5.5}The full subcategory $(\mathbf{FS}/M)_{\mathrm{WE,ML}}$\ of all
$M$-Weil exponentiable and $M$-microlinear smooth mappings of Fr\"{o}licher
spaces ($(\mathbf{FS}/M)_{\mathrm{ML}}$\ of all $M$-microlinear smooth
mappings of Fr\"{o}licher spaces, resp.) of the category $\mathbf{FS}/M$ is
cartesian closed.
\end{theorem}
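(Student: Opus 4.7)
The strategy parallels Theorem~\ref{t2.3.6}. Inherit the cartesian closed structure of the ambient $\mathbf{FS}/M$ from Theorem~\ref{t2.1.3} and verify that each of the two subcategories is closed, under the inherited operations, with respect to the terminal object $\mathrm{id}_{M}:M\to M$, binary products $\cdot\times_{M}\cdot$, and exponentials $[\cdot,\cdot]_{M}$. This closure forces the restriction of the ambient structure to be cartesian closed on the subcategory.

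For $(\mathbf{FS}/M)_{\mathrm{WE,ML}}$, the identity $\mathrm{id}_{M}$ is trivially both $M$-Weil-exponentiable and $M$-microlinear, since $M\otimes_{M}W\cong M$ canonically for every Weil algebra $W$ with all induced morphisms becoming identities. Closure under binary products is the conjunction of Propositions~\ref{t4.4} and \ref{t5.4}, while closure under exponentials is exactly Proposition~\ref{t5.2}, which grants that $\pi_{M}^{[F,E]_{M}}$ inherits both properties from $\pi_{M}^{E}$ with $\pi_{M}^{F}$ arbitrary. The $(\mathbf{FS}/M)_{\mathrm{ML}}$ case proceeds on the same template, with Proposition~\ref{t5.4} supplying closure under products.

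The only nontrivial step is closure under exponentials in the ML-only case: Proposition~\ref{t5.2} bundles $M$-microlinearity with $M$-Weil-exponentiability in its hypothesis, so it cannot be quoted as it stands. The remedy is to notice that Lemma~\ref{t4.1} furnishes the equality $[F,E]_{M}\otimes_{M}W=[F,E\otimes_{M}W]_{M}$ with no exponentiability assumption whatsoever, while $[F,\cdot]_{M}$ is right adjoint to $\cdot\times_{M}F$ in the cartesian closed category $\mathbf{FS}/M$ and therefore preserves all limits. Consequently, if $E\otimes_{M}\mathbb{D}$ is a limit diagram for every finite limit diagram $\mathbb{D}$ in $\mathbf{W}$, so is $[F,E]_{M}\otimes_{M}\mathbb{D}=[F,E\otimes_{M}\mathbb{D}]_{M}$, and $M$-microlinearity of $E$ is transported directly to $[F,E]_{M}$. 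This is the main obstacle, and once it is dispatched the remainder reduces to routine bookkeeping patterned exactly on the proof of Theorem~\ref{t2.3.6}.
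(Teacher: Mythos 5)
Your proposal is correct and, for the main case $(\mathbf{FS}/M)_{\mathrm{WE,ML}}$, coincides with the paper's own argument: the paper's proof is precisely the citation of Theorem~\ref{t2.1.3} together with Propositions~\ref{t4.2}, \ref{t4.4}, \ref{t5.2} and \ref{t5.4}, i.e.\ inheritance of the ambient cartesian closed structure plus closure under $\cdot\times_{M}\cdot$ and $[\cdot,\cdot]_{M}$ (you additionally verify the terminal object $\mathrm{id}_{M}$, which the paper leaves tacit but which is indeed needed for finite products). Where you genuinely diverge is the parenthetical case $(\mathbf{FS}/M)_{\mathrm{ML}}$: you are right that Proposition~\ref{t5.2} cannot be quoted there, since its hypothesis bundles $M$-Weil exponentiability with $M$-microlinearity, and the paper's one-line proof simply does not address this (exactly as the absolute analogue is waved through with ``similarly'' in Theorem~\ref{t2.3.6}). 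Your remedy --- Lemma~\ref{t4.1} applied objectwise to a finite limit diagram $\mathbb{D}$, combined with the fact that $[F,\cdot]_{M}$ is a right adjoint and hence limit-preserving --- is the natural repair and is evidently what is implicitly intended, so your write-up is in this respect more complete than the paper's. One small point deserves a sentence if you flesh this out: $[F,\cdot]_{M}$ preserves limits \emph{in the slice category} $\mathbf{FS}/M$, whereas $M$-microlinearity is phrased as yielding limit diagrams in $\mathbf{FS}$; since the cones $E\otimes_{M}\mathbb{D}$ live over $M$ and the diagrams for which the definition is meaningful are (effectively) connected, limits in $\mathbf{FS}/M$ and in $\mathbf{FS}$ agree there, so the step is harmless --- but it is the one place where the relative situation is not a verbatim copy of the absolute one, and the same convention is what makes your claim that $\mathrm{id}_{M}$ is $M$-microlinear legitimate.
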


\begin{proof}
This follows from Theorem \ref{t2.1.3} and Propositions \ref{t4.2},
\ref{t4.4}, \ref{t5.2} and \ref{t5.4}.
\end{proof}

Now it is appropriate to introduce the following definition.

\begin{definition}
A smooth mapping $\pi_{M}^{E}:E\rightarrow M$\ of Fr\"{o}licher spaces is
called a fiber bundle over $M$\ provided that it is $M$-Weil exponentiable and
$M$-microlinear.
\end{definition}

\begin{notation}
The category $(\mathbf{FS}/M)_{\mathrm{WE,ML}}$ is also denoted by
$\mathbf{Fib}_{M}$.
\end{notation}

Now we are going to show that

\begin{theorem}
\label{t5.6}Let $M$ be a Weil exponentiable and microlinear Fr\"{o}licher
space. The canonical projection $\pi_{M}^{M\otimes W}:M\otimes W\rightarrow M
$ is $M$-microlinear for any Weil algebra $W$.
\end{theorem}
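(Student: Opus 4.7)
The plan is to exhibit the functor $(M\otimes W)\otimes_{M}\cdot:\mathbf{W}\rightarrow\mathbf{FS}$ as a composition of two finite-limit-preserving functors, so that $M$-microlinearity of $\pi_{M}^{M\otimes W}$ reduces to the microlinearity of $M$. Concretely, I would factor it (up to natural isomorphism) as $\mathbf{W}\xrightarrow{W\widetilde{\otimes}_{\infty}\cdot}\mathbf{W}\xrightarrow{M\otimes\cdot}\mathbf{FS}$, and then apply microlinearity of $M$ to the intermediate output.

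\textbf{Step 1 (Factorisation).} For each Weil algebra $W^{\prime}$, Theorem \ref{t3.4} supplies a canonical identification $(M\otimes W)\otimes_{M}W^{\prime}=M\otimes(W\widetilde{\otimes}_{\infty}W^{\prime})$. Inspecting its proof, this identification is natural in $W^{\prime}$: the equalizer used to build $(M\otimes W)\otimes_{M}W^{\prime}$ is produced by the microlinearity of $M$ out of the equalizer defining $W\widetilde{\otimes}_{\infty}W^{\prime}$, and a morphism $f:W^{\prime}\rightarrow W^{\prime\prime}$ in $\mathbf{W}$ is transported through both equalizer constructions compatibly. Hence, for any diagram $\mathbb{D}$ in $\mathbf{W}$, $(M\otimes W)\otimes_{M}\mathbb{D}=M\otimes(W\widetilde{\otimes}_{\infty}\mathbb{D})$ as diagrams in $\mathbf{FS}$.

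\textbf{Step 2 ($W\widetilde{\otimes}_{\infty}\cdot$ preserves finite limits).} By definition $W\widetilde{\otimes}_{\infty}W^{\prime}$ is the equalizer of two parallel arrows $W\otimes_{\infty}W^{\prime}\rightrightarrows W^{\prime}$, both natural in $W^{\prime}$. Since $W\otimes_{\infty}\cdot:\mathbf{W}\rightarrow\mathbf{W}$ preserves limits (as already invoked in the proof of Proposition \ref{t4.6}) and the identity functor trivially does, $W\widetilde{\otimes}_{\infty}\cdot$ is the pointwise equalizer of two natural transformations between limit-preserving functors; by the commutation of finite limits with finite limits, it itself preserves finite limits.

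\textbf{Conclusion.} Given a finite limit diagram $\mathbb{D}$ in $\mathbf{W}$, Step 2 makes $W\widetilde{\otimes}_{\infty}\mathbb{D}$ a finite limit diagram in $\mathbf{W}$; microlinearity of $M$ then makes $M\otimes(W\widetilde{\otimes}_{\infty}\mathbb{D})$ a limit diagram in $\mathbf{FS}$; and Step 1 identifies this with $(M\otimes W)\otimes_{M}\mathbb{D}$. The main obstacle is the naturality claim in Step 1, that is, upgrading the pointwise isomorphism of Theorem \ref{t3.4} to a natural transformation between functors of $W^{\prime}$; once that bookkeeping is secured, everything else is a formal consequence of universal properties already established in the paper.
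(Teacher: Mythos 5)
Your proposal is correct and follows essentially the same route as the paper: the paper likewise rewrites $(M\otimes W)\otimes_{M}\mathbb{D}$ as $M\otimes(W\widetilde{\otimes}_{\infty}\mathbb{D})$ via Theorem \ref{t3.4}, invokes microlinearity of $M$, and isolates your Step 2 as its Lemma \ref{t5.7}, proved by the same two facts (left-exactness of $\cdot\otimes_{\infty}W$ and commutation of double limits). Your explicit attention to the naturality of the identification in Theorem \ref{t3.4} is a point the paper leaves implicit, but it is the same argument.
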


To this end, we need

\begin{lemma}
\label{t5.7}Given a Weil algebra $W$\ and a finite diagram $\mathbb{D}$\ of
Weil algebras, we have
\[
\mathrm{Lim\,}\left(  W\widetilde{\otimes}_{\infty}\mathbb{D}\right)
=W\widetilde{\otimes}_{\infty}\mathrm{Lim\,}\mathbb{D}%
\]

\end{lemma}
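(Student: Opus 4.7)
The plan is to reduce the claim to two ingredients: (i) the functor $\cdot\otimes_{\infty}W:\mathbf{W}\rightarrow\mathbf{W}$ preserves finite limits, which was already invoked inside the proof of Proposition \ref{t4.6}, and (ii) in the left exact category $\mathbf{W}$, finite limits commute with finite limits. The construction $W\widetilde{\otimes}_{\infty}(-)$ is itself an equalizer, so evaluating it on $\mathbb{D}$ and then taking the limit is a double limit, which I will swap.

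First I would note that for a fixed Weil algebra $W$ and a variable $V$, the two parallel arrows
\[
\mathcal{W}_{d\in\mathcal{D}_{V}\mapsto(0,d)\in\mathcal{D}_{W}\times\mathcal{D}_{V}},\quad \mathcal{W}_{d\in\mathcal{D}_{V}\mapsto(0,0)\in\mathcal{D}_{W}\times\mathcal{D}_{V}}:W\otimes_{\infty}V\rightarrow V
\]
defining $W\widetilde{\otimes}_{\infty}V$ are natural in $V$, because their defining formulas on infinitesimal objects do not involve $V$ at all. Consequently $W\widetilde{\otimes}_{\infty}(-):\mathbf{W}\rightarrow\mathbf{W}$ is the pointwise equalizer of a pair of natural transformations $W\otimes_{\infty}(-)\rightrightarrows\mathrm{id}_{\mathbf{W}}$, and $W\widetilde{\otimes}_{\infty}\mathbb{D}$ is just the composite diagram obtained by applying this functor to $\mathbb{D}$.

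Next I would compute $\mathrm{Lim}(W\widetilde{\otimes}_{\infty}\mathbb{D})$ by interchanging the outer limit over the shape of $\mathbb{D}$ with the inner equalizer. Using (ii), this gives
\[
\mathrm{Lim}\left(W\widetilde{\otimes}_{\infty}\mathbb{D}\right)=\mathrm{Eq}\left(\mathrm{Lim}(W\otimes_{\infty}\mathbb{D})\rightrightarrows\mathrm{Lim}\,\mathbb{D}\right),
\]
and by (i) we have $\mathrm{Lim}(W\otimes_{\infty}\mathbb{D})=W\otimes_{\infty}\mathrm{Lim}\,\mathbb{D}$, so the right-hand side is exactly the equalizer defining $W\widetilde{\otimes}_{\infty}\mathrm{Lim}\,\mathbb{D}$.

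The only potentially delicate point is to confirm that, after the interchange, the induced pair of parallel arrows $W\otimes_{\infty}\mathrm{Lim}\,\mathbb{D}\rightrightarrows\mathrm{Lim}\,\mathbb{D}$ really is the canonical pair $\mathcal{W}_{d\mapsto(0,d)}$ and $\mathcal{W}_{d\mapsto(0,0)}$ of the $\widetilde{\otimes}_{\infty}$ construction rather than some a priori different pair. This follows by unwinding the universal property of $\mathrm{Lim}\,\mathbb{D}$: both families of arrows are defined by the same uniform prescription on infinitesimal objects for every $V$ in $\mathbb{D}$, hence commute with the transition maps of $\mathbb{D}$ and factor through $\mathrm{Lim}\,\mathbb{D}$ as the required canonical maps. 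I do not expect any genuine obstacle beyond this bit of bookkeeping.
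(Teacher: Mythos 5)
Your proposal is correct and is essentially the paper's own argument: the paper's proof is a one-line appeal to exactly your two ingredients, namely left-exactness of $\cdot\otimes_{\infty}W$ and the commutation of double limits, and your write-up simply supplies the bookkeeping (naturality of the parallel pair defining $W\widetilde{\otimes}_{\infty}(-)$ and the identification of the induced pair after the interchange) that the paper leaves implicit.
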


\begin{proof}
This follows simply from the well-known two facts that the functor
$\cdot\otimes_{\infty}W:\mathbf{W}\rightarrow\mathbf{W}$ is left-exact and
that double limits commute.
\end{proof}

\begin{proof}
\textit{(of Theorem \ref{t5.5})} For any finite diagram $\mathbb{D}$\ of Weil
algebras, we have
\begin{align*}
&  \mathrm{Lim\,}\left(  \left(  M\otimes W\right)  \otimes_{M}\mathbb{D}%
\right) \\
&  =\mathrm{Lim\,}\left(  M\otimes\left(  W\widetilde{\otimes}_{\infty
}\mathbb{D}\right)  \right) \\
&  \text{[by Theorem \ref{t3.4}]}\\
&  =M\otimes\mathrm{Lim\,}\left(  W\widetilde{\otimes}_{\infty}\mathbb{D}%
\right) \\
&  \text{[since }M\text{\ is microlinear]}\\
&  =M\otimes\left(  W\widetilde{\otimes}_{\infty}\mathrm{Lim\,}\mathbb{D}%
\right) \\
&  \text{[by Lemma \ref{t5.6}]}\\
&  =\left(  M\otimes W\right)  \otimes_{M}\mathrm{Lim\,}\mathbb{D}%
\end{align*}
It may be meaningful to state here that
\end{proof}

\begin{theorem}
Let $M$ be a Weil exponentiable and microlinear Fr\"{o}licher space. The
canonical projection $\pi_{M}^{M\otimes W}:M\otimes W\rightarrow M$ is a fiber
bundle for any Weil algebra $W$.
\end{theorem}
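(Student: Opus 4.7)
The plan is almost entirely bookkeeping: the definition of a fiber bundle over $M$ (stated just before this theorem) is simply the conjunction of $M$-Weil exponentiability and $M$-microlinearity of the smooth mapping in question, and both properties of $\pi_M^{M\otimes W}$ have already been established separately in the preceding two sections. So I would not need to unfold any of the infinitesimal machinery again; I would only need to cite the two existing results and verify that their hypotheses coincide with the hypotheses of the present theorem.

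Concretely, I would proceed in three short steps. First, I would invoke Theorem \ref{t4.7} to conclude that, since $M$ is Weil exponentiable and microlinear (the hypotheses under which Theorem \ref{t4.7} was proved), the canonical projection $\pi_M^{M\otimes W}: M\otimes W \to M$ is $M$-Weil exponentiable. Second, I would invoke Theorem \ref{t5.6}, whose hypotheses are exactly the same, to obtain that $\pi_M^{M\otimes W}$ is $M$-microlinear. Finally, I would appeal to the definition of a fiber bundle over $M$ to package these two properties into the conclusion.

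There is no real obstacle to address: the interesting content has been shifted upstream into Theorem \ref{t3.4} (which identifies $(M\otimes W_1)\otimes_M W_2$ with $M\otimes(W_1\widetilde{\otimes}_\infty W_2)$), Proposition \ref{t4.6} (the associativity-style identity for $\widetilde{\otimes}_\infty$ used to get $M$-Weil exponentiability), and Lemma \ref{t5.7} together with the left-exactness of $\cdot\otimes_\infty W$ and the commutativity of double limits (which drive $M$-microlinearity). The present theorem is the clean combinatorial corollary that justifies unifying the two notions under the single heading of "fiber bundle", and its proof is a one-line combination of \ref{t4.7} and \ref{t5.6} via the new definition.
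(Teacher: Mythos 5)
Your proposal matches the paper's own proof exactly: the paper also derives the result in one line by combining Theorem \ref{t4.7} ($M$-Weil exponentiability) and Theorem \ref{t5.6} ($M$-microlinearity) with the definition of a fiber bundle over $M$. No gaps; nothing further is needed.
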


\begin{proof}
This follows from Theorems \ref{t4.7} and \ref{t5.6}.
\end{proof}

\section{Vector Bundles}

Let us begin this section with two definitions.

\begin{definition}
A smooth mapping $\pi_{M}^{E}:E\rightarrow M$\ of Fr\"{o}licher spaces is
called a prevector bundle over $M$\ providing that it is endowed with a linear
structure in the category $\mathbf{FS}/M$.
\end{definition}

\begin{definition}
A prevector bundle $\pi_{M}^{E}:E\rightarrow M$\ is called a vector bundle
over $M$\ provided that it is $M$-Euclidean in the sense that
\[
E\otimes_{M}\mathcal{W}_{D}=E\times_{M}E
\]
holds naturally.
\end{definition}

We restate Theorem \ref{t2.1.4}\ in the following way.

\begin{theorem}
\label{t6.0}The totality $\mathbf{preVect}_{M}$ of prevector bundles over
$M$\ and their $M$-linear smooth mappings over $M$\ forms a cartesian closed category.
\end{theorem}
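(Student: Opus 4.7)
The plan is to observe that Theorem \ref{t6.0} is essentially a restatement of Theorem \ref{t2.1.4} in the terminology introduced in Section~6, so the proof reduces to unpacking the definitions and invoking the earlier theorem.

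First I would match the two descriptions of $\mathbf{preVect}_{M}$. By the definition of prevector bundle given immediately above Theorem \ref{t6.0}, a prevector bundle over $M$ is a smooth mapping $\pi_{M}^{E}\colon E\rightarrow M$ equipped with a linear structure in the slice category $\mathbf{FS}/M$; this is, tautologically, the same datum as a vector space object in $\mathbf{FS}/M$. Likewise, an $M$-linear smooth mapping over $M$ is, by definition, a morphism in $\mathbf{FS}/M$ respecting these linear structures, i.e.\ precisely a linear morphism between vector space objects in $\mathbf{FS}/M$. Hence the category $\mathbf{preVect}_{M}$ named in Theorem \ref{t6.0} coincides, on the nose, with the category $\mathbf{preVect}_{M}$ of Theorem \ref{t2.1.4}.

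Once this identification is made, the desired cartesian closed structure is supplied directly by Theorem \ref{t2.1.4}: the product is $\times_{M}$ and the exponentiation is $[\cdot,\cdot]_{M}^{\mathrm{Lin}}$, as fixed in the notation immediately following that theorem. The nearest thing to an obstacle is the purely bookkeeping verification that the fibrewise addition, scalar multiplication, and zero section demanded by a prevector bundle really are the same data as the structure maps making $\pi_{M}^{E}$ into a vector space object in $\mathbf{FS}/M$, and that $M$-linearity of a smooth map over $M$ is exactly compatibility with those structure maps. This is routine, and once noted the proof is complete.
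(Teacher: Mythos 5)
Your proposal is correct and matches the paper exactly: the paper introduces Theorem \ref{t6.0} with the words ``We restate Theorem \ref{t2.1.4} in the following way,'' so the intended argument is precisely the identification of prevector bundles with vector space objects in $\mathbf{FS}/M$ followed by an appeal to Theorem \ref{t2.1.4}. Nothing further is needed.
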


As in Lemma \ref{t2.3.1}, we have

\begin{lemma}
\label{t6.1}For any objects $\pi_{M}^{E}:E\rightarrow M$\ and $\pi_{M}%
^{F}:F\rightarrow M$\ in the category $\mathbf{preVect}_{M}$\ and for any Weil
algebra $W$, we have
\[
\left[  E,F\right]  _{M}^{\mathrm{Lin}}\otimes_{M}W=\left[  E,F\otimes
_{M}W\right]  _{M}^{\mathrm{Lin}}%
\]

\end{lemma}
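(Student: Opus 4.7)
The plan is to deduce this as the linear refinement of Lemma \ref{t4.1}, using as the key enabling fact that $\cdot\otimes_{M}W$ preserves products over $M$ (Theorem \ref{t3.1}) and hence descends to a functor $\mathbf{preVect}_{M}\to\mathbf{preVect}_{M}$. A prevector bundle is specified by structure morphisms $+_{F}:F\times_{M}F\to F$, scalar multiplication $\mathbb{R}\times F\to F$, and a zero section in $\mathbf{FS}/M$, all satisfying equational axioms involving only finite products over $M$; these carry over verbatim to $F\otimes_{M}W$ by product-preservation, and linear morphisms are likewise sent to linear morphisms, so both sides of the claimed identity are genuinely objects of $\mathbf{preVect}_{M}$.

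I would then observe that $[E,F]_{M}^{\mathrm{Lin}}$ can be realized, inside the non-linear exponential $[E,F]_{M}$, as the equalizer of the pair of morphisms $[E,F]_{M}\rightrightarrows[E\times_{M}E,F]_{M}$ encoding $\varphi\circ{+_{E}}={+_{F}}\circ(\varphi\times_{M}\varphi)$, together with the analogous pair for scalar multiplication. Under the non-linear identity of Lemma \ref{t4.1}, $[E,F]_{M}\otimes_{M}W$ becomes $[E,F\otimes_{M}W]_{M}$, and a parallel rewriting shows that the image of these linearity equations is precisely the linearity equations with respect to the prolonged operations on $F\otimes_{M}W$. The equalizer on the right-hand side is therefore exactly $[E,F\otimes_{M}W]_{M}^{\mathrm{Lin}}$.

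The main obstacle is that Theorem \ref{t3.1} guarantees only product-preservation, not arbitrary equalizer-preservation, so commutation of $\cdot\otimes_{M}W$ with the above linearity equalizer is not formal. The cleanest way around this is a parametrized Yoneda argument: a point of $[E,F]_{M}^{\mathrm{Lin}}\otimes_{M}W$ is represented by a smooth $W$-parametric family of $M$-linear morphisms $E\to F$, and the bijection of Lemma \ref{t4.1} identifies it with a morphism $E\to F\otimes_{M}W$ over $M$. That image is automatically linear, because the vector operations on $F\otimes_{M}W$ are constructed pointwise as the Weil prolongation of those on $F$, a construction that is product-preserving; the converse direction is symmetric. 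Chaining these bijections naturally in the test object gives the desired identification in $\mathbf{preVect}_{M}$.
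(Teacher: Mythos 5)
The first thing to note is that the paper offers no proof of this lemma at all: it is introduced with the phrase ``As in Lemma \ref{t2.3.1}, we have,'' i.e.\ it is asserted by analogy with the unrelativized identity $[N,M]\otimes W=[N,M\otimes W]$, whose proof lives in the earlier papers \cite{nishi1,nishi2}. So there is no argument of the paper's to compare yours against; you are supplying content the paper omits. Your preliminary observations are sound and worth recording: $\cdot\otimes_{M}W$ preserves finite products over $M$ (Theorem \ref{t3.1}), hence transports the structure maps $+_{F}$, scalar multiplication and the zero section to a prevector bundle structure on $F\otimes_{M}W$, so both sides of the identity really are objects of $\mathbf{preVect}_{M}$; and you are right that realizing $[E,F]_{M}^{\mathrm{Lin}}$ as an equalizer inside $[E,F]_{M}$ settles nothing, since only product-preservation of $\cdot\otimes_{M}W$ is available.

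The gap is in your final step, where you declare ``the converse direction is symmetric.'' It is not. The forward direction is cheap: a class in $[E,F]_{M}^{\mathrm{Lin}}\otimes_{M}W$, represented by a map $\mathbb{R}^{n}\rightarrow[E,F]_{M}^{\mathrm{Lin}}$ over a point of $M$, is sent by the bijection of Lemma \ref{t4.1} to a morphism $E\rightarrow F\otimes_{M}W$ whose linearity follows because the operations on $F\otimes_{M}W$ are the prolonged operations. The converse requires two things that do not follow by mirroring this: (i) surjectivity --- every fiberwise linear morphism $E\rightarrow F\otimes_{M}W$ must be shown to admit a representative in $\mathcal{C}^{\infty}(\mathbb{R}^{n},[E,F]_{M})_{M}$ that actually factors through the subspace $[E,F]_{M}^{\mathrm{Lin}}$, i.e.\ a parametrized family consisting of linear maps; an arbitrary representative handed to you by Lemma \ref{t4.1} need not be of this form, and nothing in your sketch produces a linear one; and (ii) injectivity and smoothness --- the equivalence relation $\equiv\operatorname{mod} I$ defining $[E,F]_{M}^{\mathrm{Lin}}\otimes_{M}W$ quantifies over all structure functions of $[E,F]_{M}^{\mathrm{Lin}}$, which are in general more than the restrictions of those of $[E,F]_{M}$, and the initial Fr\"{o}licher structure on the left-hand side must be matched against the exponential structure on the right-hand side. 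These points are precisely where the content of the lemma sits; they need the same explicit work that \cite{nishi1} devotes to Lemma \ref{t2.3.1}, now carried out for the linear exponential of Theorem \ref{t2.1.2} rather than in $\mathbf{FS}$.
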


As in Proposition 2 and Proposition 10 of \cite{nishi2}, we have

\begin{proposition}
\label{t6.2}A prevector bundle $\pi_{M}^{E}:E\rightarrow M$\ is $M$-Weil
exponentiable and $M$-microlinear, so that it is a fiber bundle over $M$.
\end{proposition}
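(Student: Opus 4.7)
The plan is to adapt the proofs of Propositions 2 and 10 of \cite{nishi2}---which establish that every preconvenient vector space is Weil exponentiable and microlinear---to the relative setting, exploiting the fact that a prevector bundle $\pi_M^E:E\to M$ is by definition a vector space object in the slice category $\mathbf{FS}/M$, so that each fiber $E_x$ naturally carries the structure of a preconvenient vector space.

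First I would establish a fiberwise description of $E\otimes_{M}W$. Since the defining condition $\pi_M^E\circ f=\mathrm{const}$ on $f\in\mathcal{C}^{\infty}(\mathbb{R}^n,E)_M$ forces $f$ to land in a single fiber $E_x$, the equivalence class $[f]\in E\otimes_{M}W$ coincides with the class of $f$ viewed as an element of the absolute Weil prolongation $E_x\otimes W$ of the preconvenient vector space $E_x$. Thus the underlying set of $E\otimes_{M}W$ is canonically the disjoint union $\bigsqcup_{x\in M}E_x\otimes W$, and the initial Fr\"{o}licher structure on $E\otimes_{M}W$ induced by the maps $\chi\overrightarrow{\otimes}W$ (as in Section 3) should glue these fiberwise structures together over $M$ in the expected way.

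With this fiberwise picture in hand, $M$-Weil exponentiability is obtained by applying Proposition 2 of \cite{nishi2} pointwise: on each fiber one has
\[
E_x\otimes(W_1\otimes_{\infty}W_2)=(E_x\otimes W_1)\otimes W_2,
\]
and these isomorphisms are natural in $x$, so they assemble into the required natural isomorphism $E\otimes_{M}(W_1\otimes_{\infty}W_2)=(E\otimes_{M}W_1)\otimes_{M}W_2$ in $\mathbf{FS}/M$. Similarly, $M$-microlinearity is obtained from Proposition 10 of \cite{nishi2}: for any finite limit diagram $\mathbb{D}$ in $\mathbf{W}$ and any $x\in M$, the diagram $E_x\otimes\mathbb{D}$ is a limit diagram in $\mathbf{FS}$; since finite limits in $\mathbf{FS}/M$ are computed fiberwise, $E\otimes_{M}\mathbb{D}$ is a limit diagram in $\mathbf{FS}/M$. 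The final clause---that $\pi_M^E$ is then a fiber bundle over $M$---is immediate from the definition at the end of Section 5.

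The principal obstacle is verifying that the initial Fr\"{o}licher structure on $E\otimes_{M}W$, defined globally via the maps $\chi\overrightarrow{\otimes}W:E\otimes_{M}W\to\mathbb{R}\otimes W$ ranging over all structure functions $\chi\in\mathcal{C}^{\infty}(E,\mathbb{R})$, really agrees with the structure obtained by gluing the fiberwise absolute Weil prolongations $E_x\otimes W$ in a smooth way over $M$. This requires separating functions on $E$ into those that detect the base variable $x\in M$ and those that detect the fiber variable, and tracking both kinds through the equivalence relation $\equiv\operatorname{mod}\,I$. Once this structural compatibility is secured, the proofs of $M$-Weil exponentiability and $M$-microlinearity reduce verbatim to their absolute counterparts applied fiberwise.
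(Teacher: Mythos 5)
There is a genuine gap here, and it is precisely the one you name at the end without closing. The paper's intended argument (signalled by ``As in Proposition 2 and Proposition 10 of \cite{nishi2}'') is to re-run the absolute proofs inside the slice category, using the vector space structure of $\pi_{M}^{E}$ in $\mathbf{FS}/M$ and the bifunctor $\otimes_{M}$ directly; it never passes through a fiberwise decomposition. Your identification of $E\otimes_{M}W$ with the disjoint union of the fiberwise prolongations $E_{x}\otimes W$ is not established even at the set level: the equivalence relation defining $E\otimes_{M}W$ tests $f,g\in\mathcal{C}^{\infty}(\mathbb{R}^{n},E)_{M}$ against structure functions $\chi\in\mathcal{C}^{\infty}(E,\mathbb{R})$ on the total space, whereas $E_{x}\otimes W$ tests against $\mathcal{C}^{\infty}(E_{x},\mathbb{R})$, and in the Fr\"{o}licher setting a smooth function on the fiber need not extend to $E$, so the two relations may differ. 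Worse, both $M$-Weil exponentiability and $M$-microlinearity are statements about Fr\"{o}licher structures, not underlying sets: the structure on $E\otimes_{M}W$ is the initial one with respect to the globally defined maps $\chi\overrightarrow{\otimes}W$, and nothing in your argument shows that it agrees with any gluing of the fiberwise structures, nor that the fiberwise isomorphisms you invoke depend smoothly on the base point so as to assemble into isomorphisms in $\mathbf{FS}/M$.

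The microlinearity step has a second, independent problem: the definition in Section 5 requires $E\otimes_{M}\mathbb{D}$ to be a limit diagram in $\mathbf{FS}$, and ``finite limits are computed fiberwise'' is only a set-level remark. A cone that is a limit cone on each fiber yields a bijective comparison map into the limit, but the inverse of that bijection need not be smooth, and proving that it is smooth is exactly the content of microlinearity; a pointwise appeal to Proposition 10 of \cite{nishi2} cannot supply it. (You would also need to verify that each fiber, with its subspace Fr\"{o}licher structure, actually satisfies the hypotheses of the cited absolute propositions.) The workable route is the one the paper intends: repeat the absolute proofs with $\otimes$, $\times$, $\left[\cdot,\cdot\right]$ replaced throughout by $\otimes_{M}$, $\times_{M}$, $\left[\cdot,\cdot\right]_{M}$, exploiting the linear structure of $E$ over $M$ globally (via Theorem \ref{t3.1} and Lemma \ref{t4.1}) rather than fiber by fiber.
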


Now we are going to show that the category $\mathbf{Vect}_{M}$ of vector
bundles over $M$\ and their $M$-linear smooth mappings over $M$ is cartesian
closed. To this end, we have to establish the following two propositions.

\begin{proposition}
\label{t6.3}If both $\pi_{M}^{E}:E\rightarrow M$\ and $\pi_{M}^{F}%
:F\rightarrow M$\ are vector bundles, then so is their product $\pi
_{M}^{E\times_{M}F}:E\times_{M}F\rightarrow M$.
\end{proposition}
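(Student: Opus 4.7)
The plan is to verify the two defining properties of a vector bundle for $\pi_{M}^{E\times_{M}F}:E\times_{M}F\rightarrow M$: that it carries a linear structure in $\mathbf{FS}/M$ (making it a prevector bundle) and that it is $M$-Euclidean.

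First, I would obtain the linear structure on $E\times_{M}F$ for free from Theorem \ref{t6.0}: since $\mathbf{preVect}_{M}$ is cartesian closed, products of prevector bundles are again prevector bundles, so $E\times_{M}F$ inherits a canonical linear structure over $M$ (fiberwise, it is the direct sum of the fibers of $E$ and $F$). This takes care of the prevector bundle axiom.

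Next, to check the $M$-Euclidean condition $\left(E\times_{M}F\right)\otimes_{M}\mathcal{W}_{D}=\left(E\times_{M}F\right)\times_{M}\left(E\times_{M}F\right)$, I would apply Theorem \ref{t3.1}, which states that the functor $\cdot\otimes_{M}\mathcal{W}_{D}:\mathbf{FS}/M\to\mathbf{FS}/M$ preserves products. The computation then reads
\begin{align*}
\left(E\times_{M}F\right)\otimes_{M}\mathcal{W}_{D}
&=\left(E\otimes_{M}\mathcal{W}_{D}\right)\times_{M}\left(F\otimes_{M}\mathcal{W}_{D}\right)\\
&=\left(E\times_{M}E\right)\times_{M}\left(F\times_{M}F\right)\\
&=\left(E\times_{M}F\right)\times_{M}\left(E\times_{M}F\right),
\end{align*}
where the first equality uses Theorem \ref{t3.1}, the second uses that $E$ and $F$ are individually $M$-Euclidean, and the third is the commutativity/associativity of the fiber product in $\mathbf{FS}/M$.

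There is no real obstacle here; the argument is completely formal once one has Theorems \ref{t3.1} and \ref{t6.0} in hand. The only point requiring a pinch of care is to check that the identifications are natural in the sense compatible with the linear structures, i.e. that the isomorphism $\left(E\times_{M}E\right)\times_{M}\left(F\times_{M}F\right)\cong\left(E\times_{M}F\right)\times_{M}\left(E\times_{M}F\right)$ matches the $\mathcal{W}_{D}$-prolongation of the sum linear structure on $E\times_{M}F$; but this is immediate from the functoriality of the product and of $\cdot\otimes_{M}\mathcal{W}_{D}$ in $\mathbf{FS}/M$.
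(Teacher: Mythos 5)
Your proposal is correct and follows essentially the same route as the paper: the prevector bundle structure on $E\times_{M}F$ is taken for granted from the cartesian closedness of $\mathbf{preVect}_{M}$ (Theorem \ref{t6.0}), and the $M$-Euclidean property is verified by exactly the same three-line computation using the product-preservation of $\cdot\otimes_{M}\mathcal{W}_{D}$ (Theorem \ref{t3.1}) followed by the $M$-Euclidean hypotheses on $E$ and $F$ and a reshuffling of fiber products. Your extra remark about compatibility with the linear structures is a reasonable point of care that the paper leaves implicit.
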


\begin{proof}
We have to show that $\pi_{M}^{E\times_{M}F}:E\times_{M}F\rightarrow M$\ is
$M$-Euclidean, for which we have
\begin{align*}
&  \left(  E\times_{M}F\right)  \otimes_{M}\mathcal{W}_{D}\\
&  =\left(  E\otimes_{M}\mathcal{W}_{D}\right)  \times_{M}\left(  F\otimes
_{M}\mathcal{W}_{D}\right) \\
&  \text{\lbrack by Theorem \ref{t3.1}]}\\
&  =\left(  E\times_{M}E\right)  \times_{M}\left(  F\times_{M}F\right) \\
&  =\left(  E\times_{M}F\right)  \times_{M}\left(  E\times_{M}F\right)
\end{align*}
This completes the proof.
\end{proof}

\begin{proposition}
\label{t6.4}If both $\pi_{M}^{E}:E\rightarrow M$\ and $\pi_{M}^{F}%
:F\rightarrow M$\ are vector bundles, then so is their exponential $\pi
_{M}^{\left[  E,F\right]  _{M}^{\mathrm{Lin}}}:\left[  E,F\right]
_{M}^{\mathrm{Lin}}\rightarrow M$.
\end{proposition}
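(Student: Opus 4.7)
The plan is to verify the $M$-Euclidean condition for $[E,F]_{M}^{\mathrm{Lin}}$ by a short chain of natural isomorphisms, using (i) the relativized version of Lemma \ref{t2.3.1} for the linear exponential, (ii) the assumption that $F$ is a vector bundle, and (iii) the fact that exponentials distribute over finite products in the second variable in any cartesian closed category (here $\mathbf{preVect}_{M}$ by Theorem \ref{t6.0}).

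More concretely, first I would invoke Lemma \ref{t6.1} to rewrite
\[
\left[  E,F\right]  _{M}^{\mathrm{Lin}}\otimes_{M}\mathcal{W}_{D}=\left[
E,F\otimes_{M}\mathcal{W}_{D}\right]  _{M}^{\mathrm{Lin}}.
\]
Next I would apply the $M$-Euclideanness of $F$ to substitute $F\otimes
_{M}\mathcal{W}_{D}=F\times_{M}F$, obtaining $\left[  E,F\times_{M}F\right]
_{M}^{\mathrm{Lin}}$. Finally, since $\mathbf{preVect}_{M}$ is cartesian
closed by Theorem \ref{t6.0}, the bifunctor $\left[  E,\cdot\right]
_{M}^{\mathrm{Lin}}$, being a right adjoint in its second argument, preserves
the finite product $\times_{M}$, so
\[
\left[  E,F\times_{M}F\right]  _{M}^{\mathrm{Lin}}=\left[  E,F\right]
_{M}^{\mathrm{Lin}}\times_{M}\left[  E,F\right]  _{M}^{\mathrm{Lin}},
\]
which is the desired identity. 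Concatenating the three equalities yields
$[E,F]_{M}^{\mathrm{Lin}}\otimes_{M}\mathcal{W}_{D}=[E,F]_{M}^{\mathrm{Lin}
}\times_{M}[E,F]_{M}^{\mathrm{Lin}}$.

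There is no real obstacle: the argument is purely formal once the three
ingredients are in place. The only point that deserves a moment of care is the
naturality of each step, so that the composite isomorphism is the canonical
one used to define $M$-Euclideanness; but this is automatic since each of
Lemma \ref{t6.1}, the $M$-Euclidean identity for $F$, and the cartesian-closed
distributivity in $\mathbf{preVect}_{M}$ is already natural. Note also that
Proposition \ref{t6.2} ensures $[E,F]_{M}^{\mathrm{Lin}}$ is already a fiber
bundle, so only the $M$-Euclidean clause remains to be checked, which is
exactly what the above chain delivers.
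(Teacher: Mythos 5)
Your proposal is correct and follows essentially the same route as the paper's own proof: apply Lemma \ref{t6.1}, then the $M$-Euclidean identity for $F$, then the product-preservation of $\left[E,\cdot\right]_{M}^{\mathrm{Lin}}$ coming from cartesian closedness of $\mathbf{preVect}_{M}$. Your additional remarks on naturality and on Proposition \ref{t6.2} are sensible but do not change the argument.
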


\begin{proof}
We have to show that $\pi_{M}^{\left[  E,F\right]  _{M}^{\mathrm{Lin}}%
}:\left[  E,F\right]  _{M}^{\mathrm{Lin}}\rightarrow M$\ is $M$-Euclidean, for
which we have
\begin{align*}
&  \left[  E,F\right]  _{M}^{\mathrm{Lin}}\otimes_{M}\mathcal{W}_{D}\\
&  =\left[  E,F\otimes_{M}\mathcal{W}_{D}\right]  _{M}^{\mathrm{Lin}}\\
&  \text{\lbrack by Lemma \ref{t6.1}]}\\
&  =\left[  E,F\times_{M}F\right]  _{M}^{\mathrm{Lin}}\\
&  =\left[  E,F\right]  _{M}^{\mathrm{Lin}}\times_{M}\left[  E,F\right]
_{M}^{\mathrm{Lin}}\\
&  \text{[since the functor }\left[  E,\cdot\right]  _{M}^{\mathrm{Lin}%
}:\mathbf{preVect}_{M}\rightarrow\mathbf{preVect}_{M}\text{\ is
product-preserving]}%
\end{align*}

\end{proof}

\begin{theorem}
\label{t6.5}The category $\mathbf{Vect}_{M}$\ is cartesian closed.
\end{theorem}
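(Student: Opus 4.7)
The plan is to derive cartesian closedness of $\mathbf{Vect}_{M}$ as a straightforward consequence of Theorem \ref{t6.0} together with Propositions \ref{t6.3} and \ref{t6.4}. More precisely, I would argue that $\mathbf{Vect}_{M}$ is a full subcategory of $\mathbf{preVect}_{M}$ which is closed under the formation of finite products and exponentials as computed in $\mathbf{preVect}_{M}$; from this, the cartesian closed structure on $\mathbf{preVect}_{M}$ given by Theorem \ref{t6.0} restricts to a cartesian closed structure on $\mathbf{Vect}_{M}$.

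The first step is to identify the terminal object. In $\mathbf{preVect}_{M}$, the terminal object is the identity $\mathrm{id}_{M}:M\rightarrow M$ equipped with its trivial linear structure; the $M$-Euclidean condition $M\otimes_{M}\mathcal{W}_{D}=M\times_{M}M$ holds trivially since both sides coincide with $M$. The second step is to verify closure under binary products: given two vector bundles $\pi_{M}^{E}$ and $\pi_{M}^{F}$, Proposition \ref{t6.3} says their product in $\mathbf{preVect}_{M}$ (which coincides with $\pi_{M}^{E\times_{M}F}$) is $M$-Euclidean, hence again a vector bundle. The third step is to verify closure under exponentials: Proposition \ref{t6.4} says the exponential $\pi_{M}^{[E,F]_{M}^{\mathrm{Lin}}}$, which realizes the exponential in $\mathbf{preVect}_{M}$, is also $M$-Euclidean, hence a vector bundle.

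Having shown that $\mathbf{Vect}_{M}$ inherits terminal objects, products and exponentials from $\mathbf{preVect}_{M}$, and since $\mathbf{Vect}_{M}$ is a full subcategory, the relevant universal properties persist: the adjunction $(\cdot)\times_{M}E\dashv[E,\cdot]_{M}^{\mathrm{Lin}}$ that witnesses cartesian closedness in $\mathbf{preVect}_{M}$ immediately restricts to one between vector bundles, because both sides of the natural bijection involve only morphisms which are $M$-linear smooth mappings over $M$. This completes the proof.

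There is in fact no genuine obstacle here; the entire substance of the theorem has already been absorbed into Propositions \ref{t6.3} and \ref{t6.4}, and the present statement is simply the packaging of these two facts together with Theorem \ref{t6.0}. The only point that requires any care is making sure that the unit and counit of the cartesian closed adjunction in $\mathbf{preVect}_{M}$ are themselves morphisms in $\mathbf{Vect}_{M}$ whenever the objects involved are, but this is automatic from fullness of the inclusion.
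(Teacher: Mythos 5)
Your proposal is correct and follows exactly the paper's route: the paper likewise deduces the theorem from Theorem \ref{t6.0} together with Propositions \ref{t6.3} and \ref{t6.4}, and your elaboration of why a full subcategory closed under terminal object, products and exponentials inherits cartesian closedness is just the detail the paper leaves implicit.
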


\begin{proof}
This follows from Theorem \ref{t6.0}\ and Propositions \ref{t6.3}\ and
\ref{t6.4}.
\end{proof}

To conclude this section, we would like to establish that

\begin{theorem}
\label{t6.6}Let $M$ be a Weil exponentiable and microlinear Fr\"{o}licher
space. The canonical projection $\pi_{M}^{M\otimes\mathcal{W}_{D}}%
:M\otimes\mathcal{W}_{D}\rightarrow M$ is naturally a vector bundle.
\end{theorem}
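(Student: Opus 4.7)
The plan is to decompose the theorem into two independent tasks: first, to equip $\pi_M^{M \otimes \mathcal{W}_D}: M \otimes \mathcal{W}_D \to M$ with a linear structure in $\mathbf{FS}/M$, making it a prevector bundle; second, to verify the $M$-Euclidean identity $(M \otimes \mathcal{W}_D) \otimes_M \mathcal{W}_D = (M \otimes \mathcal{W}_D) \times_M (M \otimes \mathcal{W}_D)$.

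I would dispatch the $M$-Euclidean identity first, since it is essentially a chain of identifications already developed in the paper. By Theorem \ref{t3.4}, $(M \otimes \mathcal{W}_D) \otimes_M \mathcal{W}_D = M \otimes (\mathcal{W}_D \widetilde{\otimes}_\infty \mathcal{W}_D)$, and Proposition \ref{t3.3} rewrites this as $M \otimes \mathcal{W}_{D(2)}$. On the other hand, $\mathcal{W}_{D(2)}$ is precisely the pullback in $\mathbf{W}$ of the two augmentations $\mathcal{W}_D \to \mathbb{R} \leftarrow \mathcal{W}_D$ (visible either from the explicit presentation $\mathcal{W}_{D(2)} = \mathbb{R}[X,Y]/(X^2,Y^2,XY)$ or from the defining condition $d_1 d_2 = 0$ on $D(2)$), and microlinearity of $M$ lifts this pullback to a fibre product in $\mathbf{FS}$, yielding $M \otimes \mathcal{W}_{D(2)} = (M \otimes \mathcal{W}_D) \times_M (M \otimes \mathcal{W}_D)$, as required.

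For the linear structure, my strategy is to transport the natural ``vector-space'' operations on the infinitesimal object $D$ in the shade to $M \otimes \mathcal{W}_D$ by applying the functor $M \otimes -$. The diagonal $d \in D \mapsto (d,d) \in D(2)$, the scalings $d \in D \mapsto rd \in D$ for $r \in \mathbb{R}$, and the zero inclusion correspond under the duality $\mathcal{W}$ to morphisms $\mathcal{W}_{D(2)} \to \mathcal{W}_D$, $\mathcal{W}_D \to \mathcal{W}_D$, and $\mathcal{W}_D \to \mathbb{R}$ in $\mathbf{W}$. Applying $M \otimes -$ and using the identification of the previous paragraph, these become addition, scalar multiplication, and a zero section as morphisms in $\mathbf{FS}/M$. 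All vector-space axioms hold at the level of infinitesimal objects as diagrams of finite limits involving the higher neighbour objects $D(n) = \{(d_1,\dots,d_n) : d_i d_j = 0\}$, and microlinearity of $M$ transports each such diagram from $\mathbf{W}$ to $\mathbf{FS}$, producing the corresponding axioms on $M \otimes \mathcal{W}_D$.

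The main obstacle is this last point: one must check that the transported operations really define a linear object in the slice $\mathbf{FS}/M$, and not merely a fibrewise vector-space structure without global coherence. The bookkeeping, however, reduces to functoriality of $\cdot \otimes_M W$ (established in Section 3) together with microlinearity of $M$: every axiom is the $M \otimes -$-image of a commutative diagram in $\mathbf{W}$, and microlinearity does the rest.
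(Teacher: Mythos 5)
Your proposal is correct, and for the half of the theorem the paper actually proves it follows the same route: the $M$-Euclidean identity is obtained exactly as in the paper, via Theorem \ref{t3.4} and Proposition \ref{t3.3}, giving $\left(M\otimes\mathcal{W}_{D}\right)\otimes_{M}\mathcal{W}_{D}=M\otimes\mathcal{W}_{D(2)}=\left(M\otimes\mathcal{W}_{D}\right)\times_{M}\left(M\otimes\mathcal{W}_{D}\right)$. You in fact go one step further than the paper: the final identification $M\otimes\mathcal{W}_{D(2)}=\left(M\otimes\mathcal{W}_{D}\right)\times_{M}\left(M\otimes\mathcal{W}_{D}\right)$ is stated without comment in the paper, whereas you justify it by exhibiting $\mathcal{W}_{D(2)}=\mathbb{R}[X,Y]/(X^{2},Y^{2},XY)$ as the pullback of the two augmentations $\mathcal{W}_{D}\rightarrow\mathbb{R}\leftarrow\mathcal{W}_{D}$ in $\mathbf{W}$ and invoking microlinearity of $M$; that is precisely the right reason and a welcome addition. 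The difference lies in the linear structure: the paper does not construct it at all but cites Theorem 3 of its predecessor (\cite{nishi3}), while you sketch the standard synthetic construction directly --- addition from the diagonal $D\rightarrow D(2)$ through the identification $\left(M\otimes\mathcal{W}_{D}\right)\times_{M}\left(M\otimes\mathcal{W}_{D}\right)=M\otimes\mathcal{W}_{D(2)}$, scalar multiplication from $d\mapsto rd$, and the axioms from finite-limit diagrams on the objects $D(n)$ transported by microlinearity. Your sketch is the content of the cited result and is sound in outline (with the caveat, which you acknowledge, that the operations are not bare images of $\mathbf{W}$-morphisms but composites involving the microlinearity isomorphisms, and that everything must be checked to live over $M$); carried out in full it would make the proof self-contained where the paper relies on an external reference.
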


\begin{proof}
We have already explained in detail in Theorem 3 of \cite{nishi3} how the
object $\pi_{M}^{M\otimes\mathcal{W}_{D}}:M\otimes\mathcal{W}_{D}\rightarrow
M$ in the category $\mathbf{FS}/M$\ is naturally endowed with a linear
structure. We have to show that it is $M$-Euclidean, for which we have
\begin{align*}
&  \left(  M\otimes\mathcal{W}_{D}\right)  \otimes_{M}\mathcal{W}_{D}\\
&  =M\otimes\left(  \mathcal{W}_{D}\widetilde{\otimes}_{\infty}\mathcal{W}%
_{D}\right) \\
&  \text{\lbrack by Theorem \ref{t3.4}]}\\
&  =M\otimes\mathcal{W}_{D(2)}\\
&  \text{\lbrack by Proposition \ref{t3.3}]}\\
&  =\left(  M\otimes\mathcal{W}_{D}\right)  \times_{M}\left(  M\otimes
\mathcal{W}_{D}\right)
\end{align*}

\end{proof}

\end{document}